\documentclass[12pt,UTF8]{article}
\usepackage{amssymb,latexsym}
\usepackage{amsmath,amsthm}
\usepackage{indentfirst}
\usepackage[colorlinks,backref=pages]{hyperref}

\usepackage{color}

\theoremstyle{plain}
\newtheorem{thm}{Theorem}[section]

\newtheorem{lem}{Lemma}

\theoremstyle{definition}

\newtheorem{rem}{Remark}

\allowdisplaybreaks
\def\d {\mathrm{d}}
\def\Ric {\mathrm{Ric}}

\def\div{\mathrm{div}}

\def\na{\ensuremath{\nabla}}

\def\R{{\Bbb R}}

\usepackage{titlesec}
\renewcommand{\b}[1]{\mathbf{#1}}

\renewcommand{\d}[1]{\mathbb{#1}}

\renewcommand{\r}[1]{\mathrm{#1}}

\renewcommand{\(}{\left(}
\renewcommand{\)}{\right)}

\renewcommand{\leq}{\leqslant}
\renewcommand{\geq}{\geqslant}

\newcommand{\be}{\b e}

\newcommand{\bm}{\b m}

\newcommand{\sep}{\r{sep}}

\DeclareMathOperator{\Tr}{Tr}
\DeclareMathOperator{\tr}{tr}

\DeclareMathSymbol{\twoheadrightarrow} {\mathrel}{AMSa}{"10}
\DeclareMathOperator*{\grad}{grad}

\DeclareMathOperator*{\Scal}{Scal}
\newcommand{\norm}[1]{\left\lVert#1\right\rVert}%norm%
\newcommand{\Rmn}[1]{\uppercase\expandafter{\romannueral#1}}%uppercase roman number%

\def\grad{\mathrm{grad\, }}

\def\Ric{\mathrm{Ric\, }}

\def\tr{\mathrm{tr\, }}
\numberwithin{equation}{subsection}

\makeatletter
\newcommand{\fakephantomsection}{%
	\Hy@MakeCurrentHref{\@currenvir. \the\Hy@linkcounter}
	\Hy@raisedlink{\hyper@anchorstart{\@currentHref}\hyper@anchorend}%
}
\makeatother

\allowdisplaybreaks

\def\na{\nabla}

\def\b{\beta}

\def\({\left (}
\def\){\right )}
\def\<{\langle}
\def\>{\rangle}

\newcommand{\bel}[1]{\begin{equation}\label{#1}}
	
	\newcommand{\beq}{\begin{equation}}

			\newcommand{\ea}{\end{eqnarray}}
		\newcommand{\qe}{\end{equation}}
	\newcommand{\eeq}{\end{equation}}

\def\d{\mathrm{d}}

\def\R{\mathbb{R}}

\def \d {\mathrm{d}}
\def \Ric {\mathrm{Ric}}

\def \div{\mathrm{div}}

\def\la{\lambda}

\def\dv{\d v_g}
\usepackage{slashed}\usepackage[utf8]{inputenc}
\title{  Liouville theorem on  $p$-biharmonic map from gradient Ricci soliton}

\author{Xiang-Zhi Cao\thanks{School of Information Engineering, Nanjing Xiaozhuang University, Nanjing 211171, China}}

\numberwithin{equation}{section}

\usepackage{setspace}
\begin{document}
	\maketitle 
	\section{Introduction}
	
	Let $u:(M,g) \to (N,h)$ be  a smooth map  between Riemannian manifold. In the past decades, two famous map in geometry  has been studied widely, one is harmonic map(cf.\cite{MR164306}) , the other one is biharmonic map.  Jiang \cite{zbMATH05779520} defined biharmonic map,firstly. later, people began to generalize the concept of biharmonic map,such as  $F$-biharmonic map (cf.\cite{zbMATH06287169,zbMATH07642828} ), $f$-biharmonic maps and $f$-harmonic map, for example (cf. \cite{zbMATH06834772,zbMATH07076904,zbMATH06834772,zbMATH06149656,zbMATH05791335,zbMATH08101530,zbMATH06182078,zbMATH06149656}). $p$-biharmonic map (cf. \cite{MR4169439}) which is defined as $\tau_p(u)=\tau_p(u)=\div_g( \norm{\d u}^{p-2}\d u)=0$ . The critical point of 
	the functional 
	$$ E(u)=\int_M \frac{1}{2} \norm{\tau_p(u)}^2\d V $$  
	is called $p$-biharmonic map.

	Harmonic map is also generalized  widely. For example	$F$-harmonic map(cf. \cite{MR1820701,MR1700595}), $ f$-harmonic map (cf.\cite{MR3255005,MR3681112}).	
	Cherif\cite{zbMATH06527253}   defined $L$-harmonic map.  let $L:M\times N\times\R \to(0,\infty)$
	\begin{equation}
		E_L(\phi;D) = \int_D L\!\bigl(x,\phi(x),e(\phi)(x)\bigr)\, v_g,
	\end{equation}Its critical point (\cite{zbMATH06527253}) is defined as 
	\begin{equation}\label{eq:tauL}
		\tau_L(\phi) = L'_\phi\,\tau(\phi) + d\phi\!\bigl(\grad^M L'_\phi\bigr) - (\grad^N L)\circ\phi=0
	\end{equation}
	
In order to unify  many different kinds of harmonic and biharmonic map and inspired by action principle in unifying field theory,we introduce the following functional:
	\begin{equation}
		\begin{split}
			\int_DB\bigl(x,\phi_t(x),e(\phi_t)(x),\frac{\|\tau_L(\phi)\|^2}{2}\bigr)\,\d v_{g}
		\end{split}
	\end{equation}
	where, the function $B$ is defined by $B:M\times N\times\R  \times\R\to(0,\infty)$, $(x,y,r,s)\mapsto B(x,y,r,s)$.

Consider 

	\begin{equation}
		\begin{split}
			E_{p,q}(\phi)= \frac{1}{q}	\int_M \|\tau_p(\phi)\|^q \,\d v_{g}
		\end{split}
	\end{equation}	
Its critical point is called  $p$-$q$-biharmonic map. in particarly, $p$-$2$-biharmonic map is just the $p$-biharmonic mentioned above. 	Obviously, $p$-$q$-biharmonic map is the special case  $B$-$L$ map

	As we know , Ricci soliton are characterized by the following equation: 
	\begin{equation}
		\label{eq:soliton}
		\operatorname{Ric^M}+L_Xg=\lambda g
	\end{equation}
	Here, $\operatorname{Ric^M}$ represents the Ricci curvature of the Riemannian 
	manifold $(M,g)$,  $ \lambda\in\R$. 	Ricci solitons  with $X=\na f $ are  \emph{gradient Ricci solitons}.
	
	\begin{lem}[cf.\cite{MR2448435}]\label{17}
		Assume that $(M,g,f)$ is a gradient Ricci soliton.
		Then the following equation holds
		\begin{equation}
			\label{identity-soliton-const}
			\operatorname{Scal}^M+\|\nabla f\|^2-2\lambda f=C
		\end{equation}
		for some constant C.
	\end{lem}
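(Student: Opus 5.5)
The plan is the classical argument of Hamilton: differentiate the soliton equation once and use the contracted second Bianchi identity to show that the gradient of $\operatorname{Scal}^M+\|\nabla f\|^2-2\lambda f$ vanishes. I read \eqref{eq:soliton} with $X=\nabla f$ in the normalization under which the identity holds as stated, namely
\begin{equation*}
\operatorname{Ric}^M+\nabla^2 f=\lambda g .
\end{equation*}
If one instead insists on $L_{\nabla f}g=2\nabla^2 f$, the same argument goes through with $f$ replaced by $2f$. The coefficients of $\|\nabla f\|^2$ and $f$ then change, so fixing this convention is the only delicate point. I also assume $M$ is connected, as is implicit.

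\emph{Step 1 (trace).} Tracing the soliton equation gives $\operatorname{Scal}^M+\Delta f=n\lambda$. Taking the gradient,
\begin{equation*}
\nabla\Delta f=-\nabla \operatorname{Scal}^M .
\end{equation*}

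\emph{Step 2 (divergence).} Take the divergence of the soliton equation. The contracted second Bianchi identity gives $\operatorname{div}\operatorname{Ric}^M=\tfrac12\nabla\operatorname{Scal}^M$. Commuting covariant derivatives gives the Bochner-type identity
\begin{equation*}
\operatorname{div}\nabla^2 f=\nabla\Delta f+\operatorname{Ric}^M(\nabla f,\cdot).
\end{equation*}
Since $\operatorname{div}(\lambda g)=0$, we obtain
\begin{equation*}
\tfrac12\nabla\operatorname{Scal}^M+\nabla\Delta f+\operatorname{Ric}^M(\nabla f)=0 .
\end{equation*}
Substituting Step 1, this becomes $\operatorname{Ric}^M(\nabla f)=\tfrac12\nabla\operatorname{Scal}^M$.

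\emph{Step 3 (eliminate Ricci).} Apply the soliton equation to $\nabla f$:
\begin{equation*}
\operatorname{Ric}^M(\nabla f)=\lambda\nabla f-\nabla^2 f(\nabla f)=\lambda\nabla f-\tfrac12\nabla\|\nabla f\|^2 .
\end{equation*}
Comparing with Step 2 gives $\nabla\bigl(\operatorname{Scal}^M+\|\nabla f\|^2-2\lambda f\bigr)=0$. By connectedness this function equals a constant $C$, which is \eqref{identity-soliton-const}.

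The only real computation is the commutation formula in Step 2, which is the Ricci identity applied to $\nabla_i\nabla_j\nabla_j f$. Otherwise the main thing to watch is that the normalization of $L_Xg$ matches the coefficients in the statement.
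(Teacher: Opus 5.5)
Your proof is correct and is the standard Hamilton argument (trace, divergence with the contracted Bianchi identity and $\operatorname{div}\nabla^2 f=\nabla\Delta f+\operatorname{Ric}^M(\nabla f,\cdot)$, then eliminating the Ricci term via the soliton equation); the paper gives no proof of its own and simply cites a reference for this argument. You are also right that these coefficients require the normalization $\operatorname{Ric}^M+\nabla^2 f=\lambda g$ rather than the literal $\operatorname{Ric}^M+L_{\nabla f}g=\lambda g$, and this is the normalization the paper itself implicitly uses later when it substitutes $\Delta f=m\lambda-\operatorname{Scal}^M$.
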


	\begin{lem}[cf. \cite{MR2448435}]\label{16}
		\label{lem:bound-nabla-f}
		Assume that (M,g,f) is a steady gradient Ricci soliton, then
		$	\operatorname{Scal}^M\geq 0 $ and 
		\begin{equation}
			\label{eq:steady-bound-f}
			\|\nabla f\|^2\leq C
		\end{equation}
		for some positive constant C.
	\end{lem}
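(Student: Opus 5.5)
Taking the steady case $\lambda=0$ in Lemma \ref{17}, the identity becomes
\[
\operatorname{Scal}^M+\|\nabla f\|^2=C .
\]
So once $\operatorname{Scal}^M\geq 0$ is established, the gradient bound follows at once: $\|\nabla f\|^2=C-\operatorname{Scal}^M\leq C$. The constant $C$ is then automatically nonnegative. If $C=0$, then $\nabla f\equiv 0$ and any positive constant serves as the bound. Thus the whole content is the nonnegativity of scalar curvature, and I would organise the proof around that.

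First I would derive the standard soliton identities. Tracing $\Ric+\nabla^2 f=0$ gives $\operatorname{Scal}^M+\Delta f=0$. Taking the divergence of the soliton equation and using the contracted Bianchi identity $\operatorname{div}\Ric=\frac12\nabla \operatorname{Scal}^M$ yields $\nabla \operatorname{Scal}^M=2\Ric(\nabla f,\cdot)$. Combining these in the usual way gives the evolution-type equation
\[
\Delta \operatorname{Scal}^M-\langle \nabla f,\nabla \operatorname{Scal}^M\rangle=-2\|\Ric\|^2\leq -\tfrac{2}{n}(\operatorname{Scal}^M)^2 .
\]
In other words, $\Delta_f \operatorname{Scal}^M\leq -\frac{2}{n}(\operatorname{Scal}^M)^2$ with drift Laplacian $\Delta_f=\Delta-\nabla_{\nabla f}$. (The sign is that of $\Ric=-\nabla^2 f$. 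With the convention $\Ric=\nabla^2 f$ one flips the drift sign; the argument is unchanged.)

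The main step, and the expected obstacle, is the maximum-principle argument on a possibly noncompact complete $M$. This is where the \emph{a priori} unknown growth of $f$ must be handled. I would follow B.-L. Chen's localisation argument. Suppose $\operatorname{Scal}^M(x_0)<0$. Let $u=\min(\operatorname{Scal}^M,0)$ and multiply by a cutoff $\varphi(d(x_0,x)/r)$. At a minimum point of $\varphi u$, estimate $\Delta_f\varphi$ using the Laplacian comparison and the bound $|\nabla f|$ along geodesics. This bound is available because $\|\nabla f\|^2=C-\operatorname{Scal}^M$, so $|\nabla f|$ is controlled wherever $\operatorname{Scal}^M$ is negative and bounded below on the ball. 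The outcome is an estimate $\operatorname{Scal}^M\geq -\frac{c(n)}{r}$ (or $-c/r^2$ plus terms decaying in $r$). Letting $r\to\infty$ gives $\operatorname{Scal}^M\geq 0$.

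The delicate point is getting the cutoff error terms, in particular the drift term $\langle\nabla f,\nabla\varphi\rangle$, to decay. I would try to absorb them using the quadratic term $\frac{2}{n}u^2$ together with the self-consistent bound $|\nabla f|^2\leq C+|u|$. If this fails, the fallback is to cite the result directly from \cite{MR2448435}, as the paper itself does. Alternatively one can quote the compact case, which follows from the strong maximum principle, and reduce to it.
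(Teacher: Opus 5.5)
Your reduction is right. With $\lambda=0$, Hamilton's identity reads $\operatorname{Scal}^M+\|\nabla f\|^2=C$, so everything rests on $\operatorname{Scal}^M\geq 0$. Your identity $\Delta_f\operatorname{Scal}^M=-2\|\operatorname{Ric}\|^2\leq-\frac{2}{n}(\operatorname{Scal}^M)^2$ ($n=\dim M$) is also correct for $\operatorname{Ric}+\nabla^2 f=0$. The paper itself gives no argument here; it only cites \cite{MR2448435}. But the step you flag as the obstacle is not carried out, and the tool you name for it does not work. The ordinary Laplacian comparison for $d=d(x_0,\cdot)$ needs a lower Ricci bound on all of $B(x_0,2r)$, and a steady soliton has none a priori. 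A bound $-(n-1)K(r)$ taken from compactness of the ball contributes an error of order $\sqrt{K(r)}/r$. That error need not decay, and it cannot be absorbed into $\frac{2}{n}u^2$, since it has nothing to do with $u$. Likewise, controlling $\|\nabla f\|$ ``where $\operatorname{Scal}^M$ is bounded below on the ball'' is circular as stated: an $r$-independent lower bound on the ball is exactly what is to be proved.

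The missing idea is that the soliton equation makes the curvature term in the second variation cancel, so $\Delta_f d$ is bounded above by data near $x_0$ alone. For $d(x)>r_0$, take a minimal unit-speed geodesic $\gamma$ from $x_0$ to $x$ and set $\varphi(t)=\min(t/r_0,1)$. Since $\operatorname{Ric}(\gamma',\gamma')=-(f\circ\gamma)''$, integrating by parts gives
\[
\Delta d(x)\leq (n-1)\int_0^{d(x)}\varphi'^2\,dt+\int_0^{d(x)}\varphi^2(f\circ\gamma)''\,dt=\frac{n-1}{r_0}+\langle\nabla f,\nabla d\rangle(x)-2\int_0^{r_0}\varphi\varphi'(f\circ\gamma)'\,dt .
\]
Hence $\Delta_f d\leq\frac{n-1}{r_0}+\sup_{B(x_0,r_0)}\|\nabla f\|=:C_0$ on $M\setminus B(x_0,r_0)$, in the barrier sense and with no curvature hypothesis. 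Now take a negative minimum $x_1$ of $w=\psi(d/r)\operatorname{Scal}^M$, with $\psi'\leq 0$, $\psi'^2/\psi\leq C$, $|\psi''|\leq C$, and Calabi's trick at the cut locus. This gives $\frac{2}{n}|w(x_1)|\leq CC_0/r+C/r^2$, so $\operatorname{Scal}^M\geq -C/r$ on $B(x_0,r)$, and letting $r\to\infty$ finishes. No self-consistent bound on $\nabla f$ is needed. Your absorption idea can be rescued, but only with the Bakry--\'Emery comparison for $\operatorname{Ric}+\nabla^2 f=0$, plus the observation that $\psi\geq\psi(d(x_1)/r)$ along $\gamma$ forces $\operatorname{Scal}^M\geq\operatorname{Scal}^M(x_1)$ there. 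Finally, you silently use completeness (compact balls, minimal geodesics, existence of $x_1$). It has to be added to the statement, which is false without it. For example, $g=(dx^2+dy^2)/(x^2+y^2-1)$ with $f=-\log(x^2+y^2-1)$ on $\{x^2+y^2>1\}$ satisfies $\operatorname{Ric}+\nabla^2 f=0$. Yet $\operatorname{Scal}=-4/(x^2+y^2-1)<0$, and $\|\nabla f\|^2=4(x^2+y^2)/(x^2+y^2-1)$ is unbounded.
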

	
	Branding \cite{MR4674270} studied Liouville theorem of harmonic map and biharmonic map from gradient Ricci soliton. Later, \cite{MR4993852} generalized it to the case of p-harmonic map from gradient Ricci soliton.

	 Inspired by these works, we want to generalize it  to the case of $p$-biharmonic map. By Theorem \ref{18}, we can get 	
	\begin{thm}
		Assume that $(M,g,f)$ is an gradient Ricci soliton  and
		let $\phi\colon M\to N$ be a smooth $4$-biharmonic map  with $\phi\in W^{2,4}(M , N) \cap W^{2,14}(M , N)$.		If 
		\begin{equation}
			\begin{split}			
				&4\int_M\eta^2\operatorname{Ric}^M(e_i,e_j)\langle\na \left(\norm{d\phi}^{2} d\phi \right)(e_i,e_j),\tau_4(\phi)\rangle \d v_g \\
				&+2\int_M \left\langle \d \phi,\na \tau_4(\phi) \right\rangle\left\langle \d \phi(e_i),\d \phi(e_j) \right\rangle\Ric_{ij} \d v_g \geq 0.\\
			\end{split}
		\end{equation}
		and
		\begin{equation}
			\begin{split}
				\operatorname{Scal}^M<\lambda(m-4)
			\end{split}
		\end{equation}
		Then, $u$ is $p$-harmonic map.
	\end{thm}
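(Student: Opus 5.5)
The strategy follows Branding's approach for harmonic and biharmonic maps from gradient Ricci solitons \cite{MR4674270}: pair the stress--energy tensor of the functional with the soliton vector field $\nabla f$, integrate by parts using \eqref{eq:soliton}, and use the trace identity to absorb the $\lambda$ term. Here the functional is $E_{4,2}(\phi)=\frac12\int_M\|\tau_4(\phi)\|^2\,\d v_g$.

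\emph{Step 1: stress--energy tensor.} Vary the metric in $E_{4,2}$ to obtain a symmetric $2$-tensor $S$. It should have the form
\begin{align*}
S(X,Y) ={}& -\tfrac12\|\tau_4(\phi)\|^2 g(X,Y) + \langle \d\phi,\nabla\tau_4(\phi)\rangle\, g(X,Y)\\
& - 2\langle \d\phi(X),\nabla_Y\tau_4(\phi)\rangle_{\mathrm{sym}} + \text{terms in } \|\d\phi\|^{2}\langle \d\phi(X),\d\phi(Y)\rangle ,
\end{align*}
the last group coming from varying $\|\d\phi\|^2$ inside $\tau_4$. The standard Noether argument then gives $\operatorname{div}S=-\langle\tau_{4,2}(\phi),\d\phi\rangle$, which vanishes because $\phi$ is $4$-biharmonic. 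I take this computation to be the content of Theorem \ref{18}.

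\emph{Step 2: test against $\eta^2\nabla f$.} Let $\eta$ be a cutoff with $\eta=1$ on $B_R$, support in $B_{2R}$, and $|\nabla\eta|\le C/R$. Integrate $0=\int_M\eta^2\,\operatorname{div}S(\nabla f)$ by parts:
\[
0=\int_M \eta^2\langle S,\nabla^2 f\rangle\,\d v_g+2\int_M\eta\, S(\nabla\eta,\nabla f)\,\d v_g .
\]
The gradient soliton equation gives $\nabla^2 f=\frac12(\lambda g-\operatorname{Ric}^M)$, so
\[
\langle S,\nabla^2 f\rangle=\tfrac{\lambda}{2}\tr S-\tfrac12\langle S,\operatorname{Ric}^M\rangle .
\]

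\emph{Step 3: the two pieces.} The Ricci pieces of $\langle S,\operatorname{Ric}^M\rangle$ are exactly the two integrals in the first hypothesis; the first one appears after integrating $\langle\d\phi,\nabla\tau_4\rangle$ by parts, which produces $\nabla(\|\d\phi\|^2\d\phi)$. The hypothesis therefore fixes their sign.

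The trace $\tr S$ contains $-\frac{m}{2}\|\tau_4\|^2$. Integrating the $\langle\d\phi,\nabla\tau_4\rangle$ terms by parts converts them into $\|\tau_4\|^2$, since $\operatorname{div}(\|\d\phi\|^2\d\phi)=\tau_4$. Up to cutoff errors this leaves a multiple of $(m-4)\|\tau_4\|^2$, with the exponent $p=4$ responsible for the shift $4$. The remaining $\operatorname{Ric}^M$ trace terms are controlled by $\operatorname{Scal}^M$. Combining these with $\operatorname{Scal}^M<\lambda(m-4)$ gives an inequality of the form
\[
\int_M\eta^2\,c(x)\,\|\tau_4(\phi)\|^2\,\d v_g\le \text{cutoff errors},\qquad c>0 .
\]

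\emph{Step 4: the main obstacle, the cutoff errors.} These are $\frac{C}{R}\int_{B_{2R}}|\nabla f|\,|S|$. By Lemma \ref{16} (in the steady case), $|\nabla f|$ is bounded. More generally, Lemma \ref{17} gives $|\nabla f|\le C(1+r)$, which cancels the $1/R$ and reduces the task to showing $|S|\in L^1$ on the annuli.

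$|S|$ involves $\|\tau_4\|^2$, $|\d\phi|\,|\nabla\tau_4|$, and $|\d\phi|^6|\nabla\d\phi|$-type terms. Since $\tau_4$ is quadratic in $\d\phi$ times $\nabla\d\phi$, one needs products like $|\d\phi|^{3}|\nabla^2\d\phi|$ and $|\d\phi|^{6}|\nabla\d\phi|^2$ to be integrable. The plan is to get this from Hölder's inequality together with the assumption $\phi\in W^{2,4}\cap W^{2,14}$, which I expect is why the exponent $14$ appears.

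Once the errors are shown to tend to $0$ as $R\to\infty$, we get $\tau_4(\phi)=0$, so $\phi$ is $4$-harmonic. The delicate parts are the bookkeeping of signs in the trace and verifying that the stated integrability actually suffices for the term involving $\nabla\tau_4$.
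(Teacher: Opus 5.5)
Your outline (stress--energy tensor, test against $\eta^2\na f$, soliton equation, cutoff) is the paper's route through Lemma~\ref{im-1} and Lemma~\ref{15}. However, the two steps that carry the proof are asserted rather than computed, and neither holds as you state it.

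\textbf{Step 3 fails.} For $p=4$ the tensor is
\begin{align*}
S(X,Y)={}&-\Bigl(\tfrac12\norm{\tau_4}^2+\norm{d\phi}^2\langle d\phi,\na\tau_4\rangle\Bigr)g(X,Y)+\norm{d\phi}^2\bigl(\langle d\phi(X),\na_Y\tau_4\rangle+\langle d\phi(Y),\na_X\tau_4\rangle\bigr)\\
&+2\langle d\phi,\na\tau_4\rangle\langle d\phi(X),d\phi(Y)\rangle .
\end{align*}
Your version drops the weight $\norm{d\phi}^2$, without which $\operatorname{div}(\norm{d\phi}^2d\phi)=\tau_4$ cannot be used, and it mixes two sign conventions. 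With the correct tensor, $\tr S=-\frac m2\norm{\tau_4}^2+(4-m)\norm{d\phi}^2\langle d\phi,\na\tau_4\rangle$. Hence $\int_M\eta^2\tr S=\frac{m-8}{2}\int_M\eta^2\norm{\tau_4}^2+(\text{cutoff})$, consistent with the scaling $E_{4,2}(\phi;cg)=c^{m/2-4}E_{4,2}(\phi;g)$. So the shift is $2p=8$; the $4$ in $m-4$ is already the value for $p=2$. Doing the Ricci contraction as well (the $\na\operatorname{Scal}^M$ terms cancel by $\operatorname{div}\operatorname{Ric}^M=\frac12 d\operatorname{Scal}^M$) and normalizing $\operatorname{Ric}^M+\na^2 f=\lambda g$ (your normalization only changes an overall factor), one gets
\begin{align*}
&\int_M\eta^2\bigl(\lambda(m-8)-\operatorname{Scal}^M\bigr)\norm{\tau_4}^2+4\int_M\eta^2\operatorname{Ric}^M(e_i,e_j)\langle\na(\norm{d\phi}^2d\phi)(e_i,e_j),\tau_4\rangle\\
&-4\int_M\eta^2\langle d\phi,\na\tau_4\rangle\operatorname{Ric}^M(e_i,e_j)\langle d\phi(e_i),d\phi(e_j)\rangle=\text{cutoff terms}.
\end{align*}
As a check, the left side vanishes modulo cutoff terms when $\operatorname{Ric}^M=\lambda g$, as it must.

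This has two consequences. First, $\operatorname{Scal}^M<\lambda(m-4)$ does not make the first coefficient positive when $\lambda>0$. Second, the Ricci integrals enter as $4(\cdot)-4(\cdot)$, not as the combination $4(\cdot)+2(\cdot)$ in the hypothesis, so the hypothesis does not fix their sign. Following the paper will not repair this: its Lemma~\ref{im-1} carries the $(p-2)$-terms with signs that disagree with this computation, and the final inequality of its Section~3 argument does not follow from that lemma. The scalar-curvature condition as stated is the one for $p=2$.

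\textbf{Step 4 fails as planned.} You cannot get $|S|\in L^1$ from $\phi\in W^{2,4}\cap W^{2,14}$. $S$ contains $\na\tau_4$, which involves third derivatives of $\phi$, and those hypotheses say nothing about them; no H\"older argument helps.

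The missing idea is the integration by parts in \eqref{5}--\eqref{8}. First integrate $S(\na f,\na\eta^2)$ by parts so the derivative falls on $\eta^2$ and $\na f$. What remains involves only $\tau_4$, $d\phi$, $\na d\phi$, $\Delta\eta^2$ and $\na^2 f$, and can be estimated with $\norm{\tau_4}\le C\norm{d\phi}^2\norm{\na d\phi}$ and H\"older.

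That step needs bounds on $\operatorname{Ric}^M$ and $\na f$, and so do the cutoff terms your own integrations by parts in Step 3 produce, which are missing from your error count. Examples are $\operatorname{Scal}^M\norm{d\phi}^2\langle d\phi(\na\eta^2),\tau_4\rangle$ and $\norm{d\phi}^2\langle d\phi(\operatorname{Ric}^M(\na\eta^2)),\tau_4\rangle$. These bounds are exactly the hypotheses of Lemma~\ref{15}, and they are not in the statement.

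Finally, $\norm{\na f}\le C(1+r)$ does not follow from \eqref{identity-soliton-const} alone. It also requires completeness and a lower bound on $\operatorname{Scal}^M$.
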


	\begin{thm}\label{17}
		Assume that $(M, g, f)$ is the two-dimensional cigar soliton defined by $(\mathbb{R}^2, \frac{\d x^2+\d y^2}{1+x^2+y^2},-\log(1+x^2+y^2))$.
		Let $\phi: M \to N$ be a smooth $4$-biharmonic map with 
			\begin{equation}\label{18}
			\int_M(\|d\phi\|^{10}+\|\na d\phi\|^2)\dv+\int_M\norm{\d \phi}^{4} \|\na d\phi\|^2 \dv<\infty.
		\end{equation} Then $\phi$ is $4$-harmonic.
	\end{thm}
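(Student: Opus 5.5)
The cigar soliton is a steady gradient Ricci soliton, so $\lambda=0$. It is also two-dimensional, so $\operatorname{Ric}^M=\frac{\operatorname{Scal}^M}{2}g$ with $\operatorname{Scal}^M=\frac{4}{1+x^2+y^2}>0$. The plan is to run the same integral identity as in the preceding theorem, but to exploit the explicit geometry to close the estimate instead of assuming the sign condition.

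\textbf{Step 1: the basic identity.} Fix a cutoff $\eta$ with $\eta\equiv1$ on $B_R$, $\operatorname{supp}\eta\subset B_{2R}$ and $|\nabla\eta|\leq C/R$ (geodesic balls of the cigar metric). Test the $4$-biharmonic equation against $\eta^2\,\d\phi(\nabla f)$, mirroring Branding's approach for biharmonic maps. The soliton equation $\nabla^2 f=-\frac{\operatorname{Scal}^M}{2}g$, from $\operatorname{Ric}^M+\nabla^2f=0$, converts the Hessian terms into curvature terms. Integrating by parts should then produce an identity of the form
\begin{equation*}
\int_M \eta^2\Big(\frac{m-4}{2}\lambda-\operatorname{Scal}^M\cdot c\Big)\|\tau_4(\phi)\|^2\dv
=\text{(Ricci terms)}+\text{(terms containing }\nabla\eta).
\end{equation*}
The Ricci terms are those appearing in the hypothesis of the preceding theorem. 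With $m=2$ and $\lambda=0$, the left coefficient is a negative multiple of $\operatorname{Scal}^M$, and the identity is multiplied through by $-1$.

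\textbf{Step 2: the Ricci terms.} Since $\operatorname{Ric}^M=\frac{\operatorname{Scal}^M}{2}g$, these terms reduce to
\begin{equation*}
\operatorname{Scal}^M\big\langle \operatorname{tr}\nabla(\|\d\phi\|^2\d\phi),\tau_4\big\rangle=\operatorname{Scal}^M\|\tau_4(\phi)\|^2
\end{equation*}
and to $\operatorname{Scal}^M\|\d\phi\|^2\langle \d\phi,\nabla\tau_4\rangle$. Integrating the second by parts turns it into $\operatorname{Scal}^M\|\tau_4\|^2$ plus terms involving $\nabla\operatorname{Scal}^M$ and $\nabla\eta$.

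On the cigar we have $\|\nabla f\|^2\leq C$ by Lemma \ref{16}, and also $\operatorname{Scal}^M\leq 4$ and $|\nabla \operatorname{Scal}^M|\leq C\operatorname{Scal}^M$. The last bound follows from the explicit formula $\operatorname{Scal}^M=-\Delta f$ and Lemma \ref{17}. Hence every term of this kind is bounded by
\begin{equation*}
C\int_M\eta^2\operatorname{Scal}^M\big(\|\d\phi\|^{6}\|\nabla\d\phi\|+\dots\big).
\end{equation*}

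The main obstacle is signs and coefficients. I will try to collect all $\operatorname{Scal}^M\|\tau_4\|^2$ contributions into a single term with a definite sign. The remaining terms should be absorbed by Young's inequality, with $\|\tau_4\|\leq C\|\d\phi\|^2\|\nabla\d\phi\|$ and hence $\|\tau_4\|^2\lesssim\|\d\phi\|^4\|\nabla\d\phi\|^2$. This is exactly where the finiteness of $\int\|\d\phi\|^4\|\nabla \d\phi\|^2$ and of $\int\|\d\phi\|^{10}$ in (\ref{18}) is needed. For example, $\|\d\phi\|^{6}\|\nabla\d\phi\|\,|\nabla f|\leq \|\d\phi\|^{10}+\|\d\phi\|^2\|\nabla\d\phi\|^2$, and the last factor is controlled by interpolating between $\int\|\nabla\d\phi\|^2$ and $\int\|\d\phi\|^4\|\nabla\d\phi\|^2$.

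\textbf{Step 3: the cutoff terms.} The $\nabla\eta$ terms are bounded by $\frac{C}{R}$ times the finite quantity in (\ref{18}), using $|\nabla f|\leq C$ and Cauchy--Schwarz, so they vanish as $R\to\infty$. We then obtain
\begin{equation*}
\int_M\operatorname{Scal}^M\|\tau_4(\phi)\|^2\dv\leq0.
\end{equation*}
Because $\operatorname{Scal}^M>0$ everywhere on the cigar, this gives $\tau_4(\phi)\equiv0$, so $\phi$ is $4$-harmonic.
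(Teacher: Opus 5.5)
Your set-up is the natural one: testing against $\eta^2\,d\phi(\nabla f)$, i.e.\ using $\operatorname{div}S_{2,4}=0$ together with $\nabla^2 f=-\frac12\operatorname{Scal}^M g$. But the argument breaks at the step you call the main obstacle, and Young's inequality cannot repair it. In dimension two $\operatorname{Ric}^M=-\nabla^2 f=\frac12\operatorname{Scal}^M g$, so only $\operatorname{tr}S_{2,4}=-\|\tau_4(\phi)\|^2+2\|d\phi\|^2\langle d\phi,\nabla\tau_4(\phi)\rangle$ enters. After the integration by parts you describe, the identity is
\[
3\int_M\eta^2\operatorname{Scal}^M\|\tau_4(\phi)\|^2\dv+2\int_M\eta^2\|d\phi\|^2\big\langle d\phi(\nabla\operatorname{Scal}^M),\tau_4(\phi)\big\rangle\dv=(\text{terms containing }\nabla\eta),
\]
with $\nabla\operatorname{Scal}^M=\operatorname{Scal}^M\nabla f$ on the cigar. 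The sign bookkeeping is therefore harmless; the second integral is the problem. It carries no factor $\nabla\eta$ and has no sign. As $R\to\infty$ it converges to the fixed number $2\int_M\operatorname{Scal}^M\|d\phi\|^2\langle d\phi(\nabla f),\tau_4(\phi)\rangle\dv$, which has no reason to vanish. Absorbing it by Young's inequality leaves $C_\epsilon\int_M\operatorname{Scal}^M\|d\phi\|^6\|\nabla f\|^2\dv$. The integrability hypothesis makes this finite, not small, and your interpolation remark likewise only gives finiteness. What the argument actually yields is $\int_M\operatorname{Scal}^M\|\tau_4(\phi)\|^2\dv\leq\frac49\int_M\operatorname{Scal}^M\|d\phi\|^6\|\nabla f\|^2\dv$, not $\leq0$, so $\tau_4(\phi)\equiv0$ does not follow. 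This is exactly where $p=4$ differs from Branding's biharmonic case: for $p=m=2$ one has $\operatorname{tr}S_2=-\|\tau(\phi)\|^2$, and no $\nabla\operatorname{Scal}^M$ term ever appears.

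The paper's proof meets the same term. It is the integral $\int_M 2\nabla\operatorname{Scal}^M\langle\|\nabla\phi\|^2d\phi,\tau_4(\phi)\rangle\dv$ on the right-hand side of its final display. Instead of absorbing it, the paper lets it tend to zero as $R\to\infty$, presumably via the bound $\|\nabla\operatorname{Scal}\|\leq C/R$ on $B_R(x_0)$ from Lemma \ref{32}, stated just before the proof. That does not fill the gap either. On the cigar $\|\nabla\operatorname{Scal}^M\|=\operatorname{Scal}^M\|\nabla f\|$ is a fixed nonzero function, so no such bound holds and the integral does not tend to $0$. Moreover, once $2\operatorname{Ric}^M=\operatorname{Scal}^M g$ is inserted, the two left-hand terms of that display cancel identically. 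You located the real crux correctly. Completing the proof, however, needs a genuinely new way to control or eliminate $\int_M\operatorname{Scal}^M\|d\phi\|^2\langle d\phi(\nabla f),\tau_4(\phi)\rangle\dv$, and neither your absorption argument nor the paper's decay claim supplies it.
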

	\begin{rem}
		One can obtain similar  results to Theorem 1.1 and Theorem 1.2 for B-L-maps and p-biharmonic map. However the conditons may be very complicated. It is left to the readers as an exercise.
	\end{rem}

	{\bf Notations:} We denote the scalar curvature of $(M,g)$  by $\Scal^M$. We write $\tau_p(\phi)=\div_g(\norm{\d \phi}^{p-2}\d \phi)$.

In section 2,we studied  the stress-energy tensor of $B$-$L$ map and $p$-$q$-biharmonic map. In secton 3, we studied $4$-biharmonic map.
\section{$B$-$L$-map and $p $-$q$-biharmonic map}

\subsection{The first variation formula}

$B^\prime_\phi,\, B^{\prime\prime}_\phi\in C^\infty(M)$ represent the first order partial derivative and the second order   partial derivative of $B$ with respct to $e(u)$. The first order partial derivative and the second order   partial derivative $B$ in terms of  $\frac{\|\tau_L(\phi)\|^2}{2}$  are written as  $
B^\prime_\tau(x)$,$B^{\prime\prime}_\tau(x)$, respectively.

By the starndard method, we can get

\begin{thm} Let $\phi_t$ be the smooth variation,$\phi_0=\phi$, 
	$v = \left.\frac{\partial\phi_t}{\partial t}\right|_{t=0}$ , then
	
	\begin{equation}
		\begin{split}
			\left.\frac{d}{dt}E_{B}(\phi_t;D)\right|_{t=0}	=-\int_M \left\langle \tau_B,v \right\rangle-\int_D \left\langle \tau_{2,L,B} ,v \right\rangle
		\end{split}
	\end{equation}
	
	where  \begin{equation}
		\begin{split}
			\tau_{2,L,B}(\phi)
			&= -L'_\phi\,\tr R^N(B^\prime_\tau\tau_L(\phi),d\phi)d\phi
			- \tr\nabla^\phi L'_\phi\nabla^\phi\left(  B^\prime_\tau\tau_L(\phi)  \right)\\
			&\quad + \bigl(\nabla^N_{B^\prime_\tau\tau_L(\phi)}\grad^N L\bigr)\circ\phi
			+ \langle\nabla^\phi \left( B^\prime_\tau\tau_L(\phi) \right),d\phi\rangle\,(\grad^N L')\circ\phi \\
			&\quad - \tr\nabla^\phi\langle\nabla^\phi\left(B^\prime_\tau \tau_L(\phi) \right),d\phi\rangle\,L''_\phi\,d\phi.
		\end{split}
	\end{equation}
	and
	\begin{equation}
		\begin{split}
			\tau_B=	B'_\phi\,\tau(\phi) + d\phi\!\bigl(\grad^M B'_\phi\bigr) - (\grad^N B)\circ\phi
		\end{split}
	\end{equation}

\end{thm}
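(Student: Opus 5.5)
We compute the first variation of $E_B(\phi_t;D)=\int_D B\bigl(x,\phi_t,e(\phi_t),\tfrac12\|\tau_L(\phi_t)\|^2\bigr)\,\d v_g$ by differentiating under the integral and using the chain rule. The $t$-derivative of the integrand splits into three pieces:
\[
\frac{d}{dt}B=\langle(\grad^N B)\circ\phi_t,\partial_t\phi_t\rangle+B'_\phi\,\frac{d}{dt}e(\phi_t)+B'_\tau\,\Bigl\langle\nabla_{\partial_t}\tau_L(\phi_t),\tau_L(\phi_t)\Bigr\rangle .
\]
The first two pieces involve no $\tau_L$. Their treatment is exactly Cherif's computation for $L$-harmonic maps with $L$ replaced by $B$ (freezing the last argument). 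Writing $\frac{d}{dt}e(\phi_t)|_{t=0}=\langle\nabla^\phi v,d\phi\rangle$, we integrate by parts against $B'_\phi$, using $\div(B'_\phi\langle v,d\phi(e_i)\rangle e_i)$ and the divergence theorem on $D$ with $v$ compactly supported in $D$. This produces $-\int\langle B'_\phi\tau(\phi)+d\phi(\grad^M B'_\phi)-(\grad^N B)\circ\phi,v\rangle$, i.e.\ the $-\int\langle\tau_B,v\rangle$ term. We must track signs, since Cherif's convention has $+\grad^N L$ in the energy derivative and $-$ in $\tau_L$. Here $\grad^M B'_\phi$ means the total gradient of $x\mapsto B'_\phi(x,\phi(x),e(\phi)(x),\ldots)$.

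The main work is the third piece, which contains $\nabla_{\partial_t}\tau_L(\phi_t)$. We differentiate formula \eqref{eq:tauL} term by term, setting $W:=B'_\tau\tau_L(\phi)$ so that the piece reads $\int\langle\nabla_{\partial_t}\tau_L,W\rangle$. The pieces are as follows.

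First, $\nabla_{\partial_t}(L'_\phi\tau(\phi_t))=(\partial_tL'_\phi)\tau+L'_\phi\nabla_{\partial_t}\tau$. For the second term we use the standard identity
\[
\nabla_{\partial_t}\tau(\phi_t)=\tr\nabla^2 v-\tr R^N(v,d\phi)d\phi .
\]
Integrating by parts twice moves derivatives onto $L'_\phi W$. The curvature term then gives $-L'_\phi\tr R^N(W,d\phi)d\phi$ by the symmetry of the curvature tensor. Combined with the $d\phi(\grad^M L'_\phi)$-term variation, which is $\nabla^\phi_{\grad L'_\phi}v+\ldots$, the second-order terms assemble into $-\tr\nabla^\phi L'_\phi\nabla^\phi W$.

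Second, $\partial_t L'_\phi$ contains $\langle(\grad^N L')\circ\phi,v\rangle+L''_\phi\langle\nabla v,d\phi\rangle$. Its pairing with $\tau$, and the analogous contributions from $\grad^M L'_\phi$, produce the terms $\langle\nabla^\phi W,d\phi\rangle(\grad^NL')\circ\phi$ and, after one more integration by parts, $-\tr\nabla^\phi\bigl(\langle\nabla^\phi W,d\phi\rangle L''_\phi\,d\phi\bigr)$.

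Third, $\nabla_{\partial_t}(\grad^N L)\circ\phi_t=\nabla^N_v\grad^NL$. Since the Hessian of $L$ on $N$ is self-adjoint, this yields $(\nabla^N_W\grad^N L)\circ\phi$.

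Collecting everything with an overall minus sign gives $-\int_D\langle\tau_{2,L,B},v\rangle$. This is the same bookkeeping as in the known derivation of the bi-$L$ (or $f$-bi-harmonic) tension field, with $\tau_L$ replaced by the weighted field $B'_\tau\tau_L$.

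The main obstacle is the careful accounting in the third piece. Terms like $\tr\nabla^\phi(L''_\phi\langle\ldots\rangle d\phi)$ arise from differentiating the coefficient $L'_\phi$ both inside $L'_\phi\tau$ and inside $\grad^M L'_\phi$. Some cross terms, for example those involving $\langle\grad^NL',v\rangle$ paired with $\tau$ and with $\grad L'$, must cancel or combine exactly so that the stated compact form is reached. I would first verify the formula in the special case $L=r$ (so $L'=1$, $L''=0$, $\grad^NL=0$), where it must reduce to the biharmonic-type field $-\tr R^N(W,d\phi)d\phi-\tr\nabla^2W$. Only then would I add the $L$-dependent terms one at a time. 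All boundary terms vanish because $v$ has compact support in $D$, which also justifies replacing $\int_M$ by $\int_D$.
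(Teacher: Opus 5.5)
Your decomposition is the paper's own: the chain rule, Cherif's $L$-harmonic computation with $B$ in place of $L$ for the $\grad^N B$ and $B'_\phi$ parts (giving $\tau_B$), and then the bi-$L$ computation with $\tau_L$ replaced by $W=B'_\tau\tau_L(\phi)$. Your first two pieces are fine; the gap is your ``Third'' step. The field $(\grad^N L)\circ\phi_t$ means $(\grad^N L)(x,\phi_t(x),e(\phi_t)(x))$, so it also varies through $e(\phi_t)$:
\[
\nabla_{\partial_t}\bigl[(\grad^N L)(x,\phi_t,e(\phi_t))\bigr]\Big|_{t=0}=(\nabla^N_v\grad^N L)\circ\phi+\langle\nabla^\phi v,d\phi\rangle\,(\grad^N L')\circ\phi .
\]
You dropped the second summand. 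Paired with $-W$ and integrated by parts, it adds $\int_D\langle v,\tr\nabla^\phi(\langle W,(\grad^N L')\circ\phi\rangle\,d\phi)\rangle$ to the derivative. That means an extra summand $-\tr\nabla^\phi\bigl(\langle W,(\grad^N L')\circ\phi\rangle\,d\phi\bigr)$ in $\tau_{2,L,B}$. Nothing cancels it. The only other $\grad^N L'$-contribution comes from $\partial_t L'_\phi$. There the $\langle\tau(\phi),W\rangle$-parts from $L'_\phi\tau(\phi)$ and from $d\phi(\grad^M L'_\phi)$ cancel after one integration by parts, leaving exactly $\langle\nabla^\phi W,d\phi\rangle(\grad^N L')\circ\phi$. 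So the place where you hoped cross terms would ``cancel or combine exactly'' is precisely where the argument breaks.

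In fact no correct bookkeeping reaches the displayed formula in general. The paper's proof, which only defers to ``the reasoning in the proof of biharmonic map'', does not address this. Take $M=N=\R$, $L(x,y,r)=1+\alpha(y)\,r$ with $\alpha>0$, and $B=1+s$. Then $\tau_B=0$ and $W=\tau_L(\phi)=\alpha(\phi)\phi''+\tfrac12\alpha'(\phi)\phi'^2$. Differentiating directly gives $\int v\bigl[(\alpha W')'+\tfrac12\alpha''\phi'^2W+\alpha'\phi''W\bigr]$. The stated $\tau_{2,L,B}$ instead gives $\int v\bigl[(\alpha W')'-\tfrac12\alpha''\phi'^2W-\alpha'\phi'W'\bigr]$. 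The difference is $\int v\,(\alpha'(\phi)\phi'W)'$, which is exactly the missing term; for $\alpha=e^y$, $\phi(x)=x$ its integrand is $e^{2x}\neq 0$. So the identity holds only when that term vanishes, e.g.\ when $\partial_r L$ is independent of $y$, as for the $p$-type Lagrangians used later in the paper.

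Separately, fix your curvature sign. With $R^N(X,Y)=[\nabla_X,\nabla_Y]-\nabla_{[X,Y]}$ one has $\nabla_{\partial_t}\tau(\phi_t)=\tr\nabla^2 v+\tr R^N(v,d\phi)d\phi$. Your minus sign corresponds to the opposite convention. Under it, once the second-order part is normalized to $-\tr\nabla^\phi L'_\phi\nabla^\phi W$, the collected curvature term in $\tau_{2,L,B}$ becomes $+L'_\phi\tr R^N(W,d\phi)d\phi$, not the stated one.
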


\begin{proof}
By\cite{zbMATH06527253}, we derive 
	\begin{equation}
		\begin{split}
			\left.\frac{d}{dt}E_{B}(\phi_t;D)\right|_{t=0}
			=& \int_D \left.\partial_t\!\Bigl(B\bigl(x,\phi_t(x),e(\phi_t)(x),\tau_{L}(\phi)\bigr)\Bigr)\right|_{t=0}v_g.\\
			=&-\int_D h(\tau_L(\phi),v)\,v_g-\int_DB^\prime_\tau h(\nabla^\varphi_{\partial_t}\tau_L(\phi_t),\tau_L(\phi_t))\,v_g.
		\end{split}
	\end{equation}
	
by the reasoning in the proof of biharmonic map, we know that 	
	\begin{equation}
		\begin{split}
			\int_DB^\prime_\tau h(\nabla^\varphi_{\partial_t}\tau_L(\phi_t),\tau_L(\phi_t))\,v_g=\int_D \left\langle \tau_{2,L,B} ,v \right\rangle
		\end{split}
	\end{equation}
\end{proof}

Let $g_t$ be the smooth variation of $(M,g)$.

\begin{equation}
	\begin{split}
		\left.\frac{d}{dt}E_B(\phi;D)\right|_{t=0}
		= \int_D \delta\!B\bigl(x,\phi_t(x),e(\phi_t)(x),\frac{\|\tau_L(\phi)\|^2}{2}\bigr)\,v_g
		\\
		+ \int_DB\bigl(x,\phi_t(x),e(\phi_t)(x),\frac{\|\tau_L(\phi)\|^2}{2}\bigr)\,\delta(v_{g_t}).
	\end{split}
\end{equation}
So,
\begin{equation}
	\begin{split}
		&\left.\frac{d}{dt}E_B(\phi;D)\right|_{t=0}\\
		=& \int_D \delta(e(\phi))\,B'_\phi\,v_g
		+\frac{1}{2}\int_D B^\prime_\tau\delta(\|\tau_L(\phi)\|^2)\,v_g	+ \int_D B\,\delta(v_{g_t}),\\
	\end{split}
\end{equation}
Let $\langle\,,\,\rangle$ denote the induced Riemannian metric on $\otimes^2 T^*M$. We have
\begin{equation}\label{eq:delta}
	\delta(e(\phi)) = -\frac{1}{2}\langle\phi^*h,\delta g\rangle,\qquad
	\delta(v_{g_t}) = \frac{1}{2}\langle g,\delta g\rangle\,v_g,
\end{equation}
where $\phi^*h$ is the pull-back of the metric $h$.

\begin{thm}\label{thm:stressL} Let $g_t$ be the smooth variation of $(M,g)$.
	\begin{equation}
		\begin{split}
			&\left.\frac{d}{dt}E_L(\phi;D)\right|_{t=0}\\
			=& \frac{1}{2}\int_D B'_\phi \langle L_\phi\,g - L'_\phi\,\phi^*h,\,\delta g\rangle\, \d v_g+\frac{1}{2} \int_D B^\prime_\tau \left\langle S_{2,L}(\phi) ,\delta g \right\rangle \d v_g
		\end{split}
	\end{equation}
	where $S_{2,L} $ is given by (cf.\cite{zbMATH06527253})
	
	\begin{equation}\label{eq:S2L}
		\begin{split}
			S_{2,L}(\phi)(X,Y)
			&= -\frac{1}{2}|\tau_L(\phi)|^2\,g(X,Y)
			- L'_\phi\langle d\phi,\nabla^\phi\tau_L(\phi)\rangle\,g(X,Y) \\
			&\quad + L'_\phi\,h(d\phi(X),\nabla^\phi_Y\tau_L(\phi))
			+ L'_\phi\,h(d\phi(Y),\nabla^\phi_X\tau_L(\phi)) \\
			&\quad - \tau_L(\phi)(L)\,g(X,Y)
			- \tau_L(\phi)(L')\,h(d\phi(X),d\phi(Y)) \\
			&\quad + L''_\phi\langle d\phi,\nabla^\phi\tau_L(\phi)\rangle\,h(d\phi(X),d\phi(Y)).
		\end{split}
	\end{equation}
\end{thm}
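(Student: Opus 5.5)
The plan is to differentiate $E_B(\phi;D)$ along a compactly supported metric variation $g_t$ on $D$, keeping the map $\phi$ fixed. I will use the splitting displayed just before the statement:
\[
\int_D \delta(e(\phi))\,B'_\phi\,\d v_g+\frac12\int_D B'_\tau\,\delta(\|\tau_L(\phi)\|^2)\,\d v_g+\int_D B\,\delta(v_{g_t}).
\]
I treat the first and third terms together and the middle term separately. In the middle term the coefficient $B'_\tau$ is kept \emph{outside} as a weight; I will not absorb it into $\tau_L$.

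\textbf{First and third terms.} Insert (\ref{eq:delta}): $\delta(e(\phi))=-\frac12\langle\phi^*h,\delta g\rangle$ and $\delta(v_{g_t})=\frac12\langle g,\delta g\rangle v_g$. This gives
\[
\frac12\int_D\langle B\,g-B'_\phi\,\phi^*h,\delta g\rangle\,\d v_g .
\]
This is exactly the computation for the $L$-stress tensor in \cite{zbMATH06527253}. To match the stated form $\frac12\int_D B'_\phi\langle L_\phi g-L'_\phi\phi^*h,\delta g\rangle$, I will read $B$ through its dependence on $L$ (the chain rule in $e(\phi)$). Under that reading the energy-density part contributes $B'_\phi$ times the $L$-stress tensor $L_\phi g-L'_\phi\phi^*h$, and I record it in that form.

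\textbf{Middle term.} Here I use the result of \cite{zbMATH06527253} for the bienergy of $L$-maps. For any smooth weight $w$, the pointwise first variation of $\frac12\|\tau_L(\phi)\|^2\,\d v_g$ under $\delta g$ equals $\frac12\langle S_{2,L}(\phi),\delta g\rangle\,\d v_g$ plus divergence terms. The steps are as follows.
\begin{itemize}
\item Compute $\delta\tau_L$. The variation of $\tau(\phi)=\tr\nabla d\phi$ involves $-\langle\delta g,\nabla d\phi\rangle$ and $-d\phi(\div\delta g-\frac12\grad\tr\delta g)$. The variation of $L'_\phi$ enters through $\delta e$. The term $d\phi(\grad L'_\phi)$ contributes $-\delta g(\grad L'_\phi,\cdot)$.
\item Pair $\delta\tau_L$ with $\tau_L$.
\item Integrate by parts to move derivatives off $\delta g$. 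This produces the terms $L'_\phi h(d\phi(X),\nabla_Y\tau_L)$, the trace terms, $\tau_L(L)$, $\tau_L(L')$, and $L''_\phi\langle d\phi,\nabla\tau_L\rangle\phi^*h$ of (\ref{eq:S2L}).
\item The volume-form variation supplies the extra $\frac12\|\tau_L\|^2 g$ piece.
\end{itemize}
I will quote this computation from \cite{zbMATH06527253} rather than redo it, and then multiply the pointwise identity by $B'_\tau$ to obtain $\frac12\int_D B'_\tau\langle S_{2,L}(\phi),\delta g\rangle$.

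\textbf{Main obstacle.} The integrations by parts in the middle term, once the weight $B'_\tau$ is present, produce extra terms in which derivatives land on $B'_\tau$. These are terms involving $dB'_\tau$ paired with $d\phi$ and $\tau_L$. To reach the stated formula, where $B'_\tau$ appears only as an outside factor on $S_{2,L}(\phi)$, these extra terms must vanish or be absorbed. My plan is to check first whether the paper's convention (derivatives of $B$ taken pointwise, with $S_{2,L}$ read as the pointwise integrand of the $L$-bienergy variation) lets them be dropped. This is the case when $B'_\tau$ is constant, e.g.\ for $q=2$. Otherwise I would note that they form additional divergence-type corrections and are not covered by the stated formula.
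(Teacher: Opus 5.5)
Your decomposition is the one the paper uses; the paper adds nothing beyond it and the citation of Cherif. However, the two steps you use to bring it into the stated form are not valid, and their errors happen to hide each other.

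First, the first and third terms give exactly $\frac12\int_D\langle B\,g-B'_\phi\,\phi^*h,\delta g\rangle\,\mathrm{d}v_g$. Since $B'_\phi=\partial B/\partial e$, no ``chain-rule reading'' turns the $g$-coefficient $B$ into $B'_\phi L_\phi$, or $B'_\phi$ into $B'_\phi L'_\phi$. Even when $B$ depends on $e$ only through $L$, the coefficient of $g$ is still $B$.

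Second, in the middle term $\frac12\int_D B'_\tau\,\delta(\|\tau_L\|^2)\,\mathrm{d}v_g$ the volume form is not varied; its variation was already used, with weight $B$. Cherif's $S_{2,L}$, by contrast, is the first variation of $\frac12\|\tau_L\|^2\,\mathrm{d}v_g$. Its term $-\frac12\|\tau_L\|^2g$ is the sum of $+\frac12\|\tau_L\|^2g$ from $\delta(v_{g_t})$ and $-\|\tau_L\|^2g$ from integrating $\frac12L'_\phi\,d\phi(\mathrm{grad}\,\mathrm{tr}\,\delta g)$ against $\tau_L$ by parts. Multiplying Cherif's identity by $B'_\tau$ therefore counts the volume variation twice. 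Your bullet ``the volume-form variation supplies the extra $\frac12\|\tau_L\|^2g$ piece'' is exactly this double count.

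In the model case $B(x,y,r,s)=s$ the two mistakes cancel. Your outer terms give $\frac14\int_D\|\tau_L\|^2\langle g,\delta g\rangle\,\mathrm{d}v_g$, the ``reading'' discards it, and the full $S_{2,L}$ restores it. In general they do not cancel.

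You are right that $B'_\tau$ cannot stay outside. Integrating by parts against $B'_\tau\tau_L$ produces the following terms, all involving the combination $\nabla^\phi(B'_\tau\tau_L)$ that appears in the paper's $\tau_{2,L,B}$:
\begin{itemize}
\item $L'_\phi\,h(d\phi(X),\nabla^\phi_Y(B'_\tau\tau_L))$,
\item $L'_\phi\langle d\phi,\nabla^\phi(B'_\tau\tau_L)\rangle g$,
\item $L''_\phi\langle d\phi,\nabla^\phi(B'_\tau\tau_L)\rangle\phi^*h$.
\end{itemize}
The extra pieces, such as $L'_\phi\,(YB'_\tau)\,h(d\phi(X),\tau_L(\phi))$, are tensors paired pointwise with an arbitrary compactly supported $\delta g$. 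They are not divergences, so they cannot be dropped, contrary to your ``divergence-type corrections''.

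So the proposal cannot be completed into a proof of the displayed formula, and the paper gives no argument that would do it. For instance, take $B=\frac1q\|\tau_p\|^q$, so $B'_\phi=0$ and $B'_\tau=\|\tau_p\|^{q-2}$. Then $\|\tau_p\|^qg$ enters with coefficient $\frac1q-1$ (from $B\,\delta(v_{g_t})$ plus the integration by parts), while the stated formula gives $-\frac12$.

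What your decomposition does prove, granting Cherif's computation, is
\[
\left.\frac{d}{dt}E_B(\phi;D)\right|_{t=0}=\frac12\int_D\big\langle B\,g-B'_\phi\,\phi^*h+\widetilde S,\ \delta g\big\rangle\,\mathrm{d}v_g .
\]
Here $\widetilde S$ is obtained from $S_{2,L}(\phi)$ in two steps. Replace $\tau_L(\phi)$ by $B'_\tau\tau_L(\phi)$ in every term linear in $\tau_L$, in particular inside every $\nabla^\phi$. Replace $-\frac12\|\tau_L\|^2g$ by $-B'_\tau\|\tau_L\|^2g$. This agrees with the stated formula only in special cases such as $B=\frac12\|\tau_L\|^2$, and it is the statement you should aim to prove.
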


So we can get the stress energy tensor $S_{B,L}$  as 
\begin{equation}
	\begin{split}
		S_{B,L}	=B'_\phi \left(  L_\phi\,g - L'_\phi\,\phi^*h \right)+B^\prime_\tau  S_{2,L}(\phi) 
	\end{split}
\end{equation}

\subsection{$p$-$q$-biharmonic map}

\begin{thm}Let $\phi_t$ be the smooth variation,$\phi_0=\phi$, 
	$v = \left.\frac{\partial\phi_t}{\partial t}\right|_{t=0}$ , then
	\begin{equation}
		\begin{split}
			\frac{d}{d t} E_{p,q}(\phi_t)|_{t=0}=-\int_M \left\langle \tau_{2,p,q}, v \right\rangle
		\end{split}
	\end{equation}
	where
	\begin{equation}
		\begin{split}
			\tau_{2,p,q}=&-\norm{\d \phi}^{p-2}\tr_gR^N(\norm{\tau_p}^{q-2}\tau_p,\d \phi)\d \phi-\tr_g   \na\left[ \norm{\d \phi}^{p-2} \na \left( \norm{\tau_p}^{q-2}\tau_p(\phi) \right)\right] \\
			&-(p-2)\tr_g\left[ \na \left\langle \na \left(\norm{\tau_p}^{q-2} \tau_p(\phi) \right), \d \phi \right\rangle \norm{\d \phi}^{p-4}\d \phi\right] 
		\end{split}
	\end{equation}
	
	In the case q=2, we know that 
	\begin{equation}
		\begin{split}
			\tau_{2,p,2}=&-\norm{\d \phi}^{p-2}\tr_gR^N(\tau_p,\d \phi)\d \phi-\tr_g \na \norm{\d \phi}^{p-2}\na \tau_p(\phi)\\
			&-(p-2)\tr_g\na \left\langle \na \tau_p(\phi), \d \phi \right\rangle \norm{\d \phi}^{p-4}\d \phi
		\end{split}
	\end{equation}
\end{thm}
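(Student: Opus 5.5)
The plan is the standard first-variation computation used for biharmonic maps, with the weight $\norm{\d\phi}^{p-2}$ carried along. Throughout I write $W=\norm{\tau_p(\phi)}^{q-2}\tau_p(\phi)$. The steps, in order:

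\textbf{Step 1: differentiate the functional.} Set $\Phi(x,t)=\phi_t(x)$ and work with the pull-back connection $\nabla$ on $\Phi^{-1}TN$. Since $\nabla$ is metric,
\[
\frac{d}{dt}\frac1q\norm{\tau_p(\phi_t)}^q=\norm{\tau_p}^{q-2}\langle\nabla_{\partial_t}\tau_p(\phi_t),\tau_p\rangle,
\]
so at $t=0$ the derivative of $E_{p,q}$ is $\int_M\langle\nabla_{\partial_t}\tau_p(\phi_t),W\rangle$. This is the same first step as in the proof of the first variation formula for $B$ above, specialised to $B=\frac1q s^{q/2}$.

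\textbf{Step 2: compute $\nabla_{\partial_t}\tau_p(\phi_t)$.} Use a local orthonormal frame $\{e_i\}$ that is geodesic at the point, so that $\tau_p=\nabla_{e_i}\bigl(\norm{\d\phi}^{p-2}\d\phi(e_i)\bigr)$. Commuting $\nabla_{\partial_t}$ with $\nabla_{e_i}$ and using $[\partial_t,e_i]=0$ gives
\[
\nabla_{\partial_t}\tau_p=\nabla_{e_i}\nabla_{\partial_t}\bigl(\norm{\d\phi}^{p-2}\d\phi(e_i)\bigr)+R^N\bigl(\d\Phi(\partial_t),\d\phi(e_i)\bigr)\norm{\d\phi}^{p-2}\d\phi(e_i).
\]
Next, $\nabla_{\partial_t}\d\Phi(e_i)=\nabla_{e_i}v$. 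Therefore
\[
\nabla_{\partial_t}\bigl(\norm{\d\phi}^{p-2}\d\phi(e_i)\bigr)=\norm{\d\phi}^{p-2}\nabla_{e_i}v+(p-2)\norm{\d\phi}^{p-4}\langle\nabla v,\d\phi\rangle\,\d\phi(e_i).
\]

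\textbf{Step 3: move all derivatives onto $W$.} Substitute Step 2 into Step 1 and integrate by parts twice via the divergence theorem. This is justified for compactly supported variations, or under decay assumptions; as is customary I would take $v$ with compact support.

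\emph{Curvature term.} By the curvature symmetry $\langle R(X,Y)Z,W\rangle=\langle R(W,Z)Y,X\rangle$ it becomes $\langle v,\norm{\d\phi}^{p-2}R^N(W,\d\phi(e_i))\d\phi(e_i)\rangle$. This matches the first term of $\tau_{2,p,q}$ up to sign.

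\emph{Laplacian-type term.} $\int\langle\nabla_{e_i}(\norm{\d\phi}^{p-2}\nabla_{e_i}v),W\rangle$ becomes $\int\langle v,\tr\nabla(\norm{\d\phi}^{p-2}\nabla W)\rangle$.

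\emph{Term with $(p-2)$.} Integrating by parts once gives $-(p-2)\int\norm{\d\phi}^{p-4}\langle\nabla v,\d\phi\rangle\langle\d\phi,\nabla W\rangle$. Writing $\langle\nabla v,\d\phi\rangle=\langle\nabla_{e_j}v,\d\phi(e_j)\rangle$ and integrating by parts again yields $(p-2)\int\langle v,\tr\nabla\bigl(\langle\nabla W,\d\phi\rangle\norm{\d\phi}^{p-4}\d\phi\bigr)\rangle$.

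Collecting the three terms with an overall minus sign gives $-\int\langle\tau_{2,p,q},v\rangle$.

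\textbf{Step 4: the case $q=2$.} Here $W=\tau_p(\phi)$, and the formula for $\tau_{2,p,2}$ follows by direct substitution.

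\textbf{Main obstacle.} The delicate part is the sign bookkeeping in Step 3. The curvature term's sign depends on the convention for $R^N$, and the overall sign depends on reading $\tau_{2,p,q}$ exactly as displayed. I would fix $R(X,Y)=[\nabla_X,\nabla_Y]-\nabla_{[X,Y]}$ and check the sign against the known case $p=q=2$, where $\tau_2=-\Delta\tau-\tr R^N(\tau,\d\phi)\d\phi$ must be recovered.

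A second point to watch is that $\norm{\tau_p}^{q-2}$ may be non-smooth where $\tau_p=0$ when $q<2$. I would either assume $q\ge2$ or interpret the identity weakly.
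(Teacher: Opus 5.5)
Your proposal is correct. It is the standard first-variation computation the paper relies on: the paper gives no written proof of this theorem and, as for the $B$-$L$ case, defers to the usual biharmonic/Cherif-type argument. That argument is what you do: differentiate $\frac1q\|\tau_p\|^q$, commute $\nabla_{\partial_t}$ past the divergence to pick up $\|d\phi\|^{p-2}\operatorname{tr}R^N(v,d\phi)d\phi$, and integrate by parts twice. Your signs agree with the displayed $\tau_{2,p,q}$ under $R^N(X,Y)=[\nabla_X,\nabla_Y]-\nabla_{[X,Y]}$, and your caveats on compact support of $v$ and on the regularity of $\|\tau_p\|^{q-2}$ are sensible additions the paper omits.
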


Now we consider the stress energy tensor.
\begin{lem}[cf.\cite{zbMATH06268870}]
	\begin{equation}
		\begin{split}
			&\int_D \norm{\d \phi}^{p-2}\left\langle \d \phi(\xi), \tau_p(\phi) \right\rangle\\
			=&\int_D\left\langle -\text{sym}(\na \norm{\d \phi}^{p-2}\left\langle \d \phi, \tau_p(\phi) \right\rangle) , \delta g\right\rangle \\
			+&\frac{1}{2} \int_D \left\langle \div^M\left( \norm{\d \phi}^{p-2}\left\langle \d \phi(\xi), \tau_p(\phi) \right\rangle^{\sharp}  \right)g, \delta g \right\rangle
		\end{split}
	\end{equation}
\end{lem}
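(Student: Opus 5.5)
The identity should follow from one integration by parts once $\xi$ is read correctly. In the variation of $E_{p,q}$ with respect to the metric, the quantity $\xi$ that pairs with $\norm{\d \phi}^{p-2}\d\phi$ is the trace of the variation of the Levi-Civita connection. I will assume this interpretation, since the statement leaves $\xi$ implicit. Let $\delta g$ be a symmetric $2$-tensor with compact support in $D$. Then
\begin{equation*}
\xi^k = g^{ij}\,\delta\Gamma^k_{ij} = g^{kl}\Big(\nabla_i \delta g_{il} - \tfrac{1}{2}\nabla_l (\tr_g \delta g)\Big),
\end{equation*}
that is, $\xi = \big(\div \delta g - \tfrac12 \d(\tr_g\delta g)\big)^{\sharp}$. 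This is the standard formula for the first variation of the Christoffel symbols, traced over $i,j$. It is exactly the term produced when one varies $\tau_p(\phi)=\tr_g\nabla(\norm{\d\phi}^{p-2}\d\phi)$ in the metric.

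Next I introduce the $1$-form
\begin{equation*}
\omega(X) = \norm{\d \phi}^{p-2}\langle \d\phi(X),\tau_p(\phi)\rangle .
\end{equation*}
The left-hand side is then $\int_D \omega(\xi)\,\d v_g$. Inserting the formula for $\xi$ gives
\begin{equation*}
\int_D \omega_l\,\nabla_i\delta g_{il}\,\d v_g - \frac12\int_D \omega_l\,\nabla_l(\tr_g\delta g)\,\d v_g .
\end{equation*}
Since $\delta g$ vanishes near $\partial D$, the divergence theorem removes all boundary terms. The first integral becomes $-\int_D \nabla_i\omega_l\,\delta g_{il}$. Because $\delta g$ is symmetric, this equals $-\int_D\langle \mathrm{sym}(\nabla\omega),\delta g\rangle$. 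The second integral becomes $\frac12\int_D (\div^M\omega^\sharp)\,\tr_g\delta g = \frac12\int_D\langle \div^M(\omega^\sharp)\,g,\delta g\rangle$. Adding the two gives precisely the stated right-hand side.

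The main obstacle is the first step, namely pinning down the meaning of $\xi$ and getting the signs and the factor $\tfrac12$ in $\delta\Gamma$ right. Choosing $\xi=L_\xi$-type data, for instance $\delta g = L_\xi g$, would not reproduce the formula. I would therefore verify the variation formula $\delta\Gamma^k_{ij}=\tfrac12 g^{kl}(\nabla_i\delta g_{jl}+\nabla_j\delta g_{il}-\nabla_l\delta g_{ij})$ and trace it carefully. After that, the argument is a routine integration by parts, and the symmetrization uses only the symmetry of $\delta g$.
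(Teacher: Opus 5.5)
Your argument is correct and is essentially the standard one behind the cited result; the paper itself gives no proof, only the reference. You read $\xi$ as the trace $g^{ij}\delta\Gamma^k_{ij}$ of the variation of the Levi-Civita connection, i.e. $\xi=(\mathrm{div}\,\delta g-\tfrac12\,\mathrm{d}(\mathrm{tr}_g\delta g))^\sharp$, integrate by parts against $\omega=\|\mathrm{d}\phi\|^{p-2}\langle\mathrm{d}\phi(\cdot),\tau_p(\phi)\rangle$ with $\delta g$ compactly supported in $D$, and you rightly take the garbled $\langle\mathrm{d}\phi(\xi),\tau_p(\phi)\rangle^\sharp$ inside the divergence to mean $\omega^\sharp$, with a sign for $\xi$ consistent with the term $-2\|\mathrm{d}\phi\|^{p-2}\langle\mathrm{d}\phi(\xi),\tau_p(\phi)\rangle$ in the paper's formula for $\delta\|\tau_p(\phi)\|^2$.
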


\begin{lem}[cf.\cite{zbMATH06268870}]
	let $\omega=	\norm{\d \phi}^{p-4} \left\langle \d \phi,\tau_p \right\rangle$
	\begin{equation}
		\begin{split}
			-\int_D	\norm{\d \phi}^{p-4} \left\langle  \d \phi (\na \left\langle  \phi^*h,\delta g \right\rangle  ),\tau_p\right\rangle 
			=\int_D \left\langle \phi^*h,\delta g \right\rangle \div \omega
		\end{split}
	\end{equation}
\end{lem}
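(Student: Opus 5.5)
The identity in the last lemma is an integration by parts, in which the quantity $\langle \phi^*h,\delta g\rangle$ plays the role of a scalar test function. I would treat it that way. Put
\[
\psi:=\langle \phi^*h,\delta g\rangle,
\]
which is a smooth function on $M$. Since $\delta g$ is the variation of a compactly supported family $g_t$, we may take $\operatorname{supp}\delta g\subset D$, so $\psi$ is compactly supported in $D$. The quantity $\omega=\norm{\d \phi}^{p-4}\langle \d \phi,\tau_p\rangle$ should be read as the $1$-form
\[
\omega(X)=\norm{\d \phi}^{p-4}\langle \d \phi(X),\tau_p(\phi)\rangle ,
\]
obtained by contracting the $\phi^{-1}TN$-valued $1$-form $\d\phi$ with the section $\tau_p(\phi)$. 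Identify $\omega$ with its dual vector field $\omega^\sharp$. The integrand on the left is then exactly $\omega(\na\psi)=\langle \omega^\sharp,\na\psi\rangle$, because
\[
\norm{\d\phi}^{p-4}\langle \d\phi(\na\psi),\tau_p\rangle=\omega(\na\psi)
\]
by linearity of $\d\phi$ in its argument.

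The key step is the pointwise product rule
\[
\div(\psi\,\omega^\sharp)=\psi\,\div\omega+\langle \na\psi,\omega^\sharp\rangle .
\]
I would verify this in a local orthonormal frame $\{e_i\}$ as $\sum_i e_i\big(\psi\,\omega(e_i)\big)-\psi\,\omega(\na_{e_i}e_i)$. Integrating over $D$ and applying the divergence theorem, the boundary term $\int_{\partial D}\psi\,\omega(\nu)$ vanishes because $\psi$ vanishes near $\partial D$. This gives
\[
\int_D \omega(\na\psi)=-\int_D \psi\,\div\omega .
\]
Multiplying by $-1$ yields the claimed formula
\[
-\int_D \norm{\d \phi}^{p-4}\langle \d \phi(\na\psi),\tau_p\rangle=\int_D \psi\,\div\omega .
\]

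The main obstacle is regularity of the weight $\norm{\d\phi}^{p-4}$ where $\d\phi=0$ and $p<4$. There $\omega$ need not be $C^1$, and $\div\omega$ may not be classically defined. I would first prove the identity on the open set $\{\d\phi\neq 0\}$. To extend it, I would replace $\norm{\d\phi}^2$ by $\norm{\d\phi}^2+\varepsilon$, prove the identity for the regularized form $\omega_\varepsilon$, and pass to the limit $\varepsilon\to0$. The left side converges by dominated convergence. The right side is then interpreted as $\int_D\psi\,\div\omega$ in the distributional sense, which is the sense needed when the lemma is used in the stress–energy computation. For $p\geq4$, the case actually used later ($p=4$), $\omega$ is smooth and no such argument is needed. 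The remaining task is to fix sign conventions, namely that $\div$ is the metric divergence $\sum_i(\na_{e_i}\omega)(e_i)$, so that the sign in the statement comes out exactly.
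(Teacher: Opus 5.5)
Your argument is correct and is the standard integration by parts the paper relies on (the paper only cites the result and gives no proof): take $\psi=\langle\phi^*h,\delta g\rangle$ compactly supported in $D$ and apply the divergence theorem to $\psi\,\omega^\sharp$, which yields the stated sign. Two minor points: for non-integer $p>4$ the form $\omega$ need not be smooth where $d\phi=0$, but it is $C^1$, which is all the divergence theorem needs. For $p<4$ the $\varepsilon$-regularization is unnecessary, because once $\mathrm{div}\,\omega$ is read distributionally the identity is simply its definition, provided $\omega\in L^1_{\mathrm{loc}}$.
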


\begin{thm}Let $g_t$ be the smooth variation of $(M,g)$.
	\begin{equation}
		\begin{split}
			\frac{\d}{\d t}	E_{p,q}(\phi)=\frac{1}{2}\int_D \left\langle S_{2,p,q} , \delta g \right\rangle\d v_g
		\end{split}
	\end{equation}
	where \begin{equation}
		\begin{split}
			S_{2,p,q}(X,Y)=&\norm{\tau_p}^{q-2}\bigg[ -\big(\frac{1}{2}\|\tau_p(\phi)\|^{2}+\norm{\d \phi}^{p-2}\langle d\phi,\na\tau_p(\phi)\rangle\big)g(X,Y)\\
			&+\norm{\d \phi}^{p-2}\langle d\phi(X),\na_Y\tau_p(\phi)\rangle+\norm{\d \phi}^{p-2}\langle d\phi(Y),\na_X\tau_p(\phi)\rangle\\
			&+(p-2)\norm{\tau_p(\phi)}^{p-4}\left\langle \d \phi,\na \tau_p(\phi) \right\rangle\left\langle \d \phi(X),\d \phi(Y) \right\rangle\bigg]
		\end{split}
	\end{equation}
\end{thm}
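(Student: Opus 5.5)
We compute the first variation of $E_{p,q}(\phi)=\frac1q\int_D\|\tau_p(\phi)\|^q\,\d v_g$ with respect to a compactly supported variation $g_t$ of the metric, keeping $\phi$ fixed. Write $\delta g=\frac{\partial g_t}{\partial t}\big|_{t=0}$. Differentiating under the integral gives
\begin{equation*}
\frac{\d}{\d t}E_{p,q}=\int_D \|\tau_p\|^{q-2}\langle \delta\tau_p,\tau_p\rangle\,\d v_g+\frac1q\int_D\|\tau_p\|^q\,\delta(\d v_{g_t}).
\end{equation*}
By \eqref{eq:delta}, the volume term is $\frac12\langle \frac1q\|\tau_p\|^q g,\delta g\rangle$. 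The weight $\|\tau_p\|^{q-2}$ only multiplies the variation of $\tau_p$, so the whole computation reduces to the $q=2$ case with this weight inserted, exactly as $B'_\tau$ appears in Theorem \ref{thm:stressL}. For this reason the result should be written as $\|\tau_p\|^{q-2}[\dots]$, with the $g$-term adjusted by the volume factor.

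Next we compute $\delta\tau_p$ for $\tau_p=\tr_g\nabla(\|\d\phi\|^{p-2}\d\phi)$. There are three sources of variation.

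\begin{enumerate}
\item The trace: $\delta(g^{ij})=-\delta g^{ij}$ contributes $-\langle \delta g,\|\d\phi\|^{p-2}\nabla\d\phi\rangle$.
\item The Levi-Civita connection: $\delta\Gamma$ contributes $-\|\d\phi\|^{p-2}\d\phi(\xi)$, where $\xi=\div\delta g-\frac12\nabla\tr\delta g$ (the standard formula from the biharmonic stress-energy computation).
\item The factor $\|\d\phi\|^{p-2}$: since $\delta\|\d\phi\|^2=-\langle\phi^*h,\delta g\rangle$, it contributes $-\frac{p-2}{2}\tr\nabla\big(\|\d\phi\|^{p-4}\langle\phi^*h,\delta g\rangle\d\phi\big)$.
\end{enumerate}

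We pair each piece with $\|\tau_p\|^{q-2}\tau_p$ and integrate by parts so that $\delta g$ appears undifferentiated.

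\begin{itemize}
\item Piece 1 becomes the trace-type terms $\|\d\phi\|^{p-2}\langle\d\phi(X),\nabla_Y\tau_p\rangle$ after integrating $\nabla\d\phi$ by parts against $\tau_p$, symmetrized in $X,Y$.
\item Piece 2 is handled by the first of the two preceding lemmas (cf.\ \cite{zbMATH06268870}). It converts $\int\|\d\phi\|^{p-2}\langle\d\phi(\xi),\tau_p\rangle$ into $-\mathrm{sym}\nabla$ of the 1-form $\|\d\phi\|^{p-2}\langle\d\phi,\tau_p\rangle$ plus $\frac12\div(\cdot)g$. Expanding the divergence yields $-\|\d\phi\|^{p-2}\langle\d\phi,\nabla\tau_p\rangle g$ and the symmetric terms. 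The pieces involving $\langle\tau_p,\tau_p\rangle$ produced by $\div(\|\d\phi\|^{p-2}\d\phi)=\tau_p$ combine with the volume term to give the coefficient $-\frac12\|\tau_p\|^2 g$.
\item Piece 3 is handled by the second lemma with $\omega=\|\d\phi\|^{p-4}\langle\d\phi,\tau_p\rangle$. After one integration by parts it becomes $\langle\phi^*h,\delta g\rangle$ times a divergence, which produces the term $(p-2)\|\cdot\|^{p-4}\langle\d\phi,\nabla\tau_p\rangle\langle\d\phi(X),\d\phi(Y)\rangle$.
\end{itemize}

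Finally we collect all terms as $\frac12\langle S_{2,p,q},\delta g\rangle$.

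The main obstacle is the bookkeeping when $q\neq2$. In each integration by parts the derivative can fall on the weight $\|\tau_p\|^{q-2}$, producing extra terms with $\nabla\|\tau_p\|^{q-2}$. The cleanest treatment is to define $\sigma=\|\tau_p\|^{q-2}\tau_p$ and carry out every integration by parts with $\nabla\sigma$ in place of $\nabla\tau_p$. The stated formula, with the weight factored out in front, then holds literally for $q=2$; for general $q$ it should be read with $\nabla\tau_p$ interpreted as $\nabla(\|\tau_p\|^{q-2}\tau_p)$. I would check this carefully, along with the constant in front of $\|\tau_p\|^2$, which comes out as $\frac1q-1$ before the pieces combine. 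This matching of the $g$-coefficient is where I expect the most care to be needed.
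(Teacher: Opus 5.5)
Your strategy is the same as the paper's: put the weight in front of $\delta\norm{\tau_p(\phi)}^2$, reuse Cherif's $q=2$ computation with the two cited lemmas, and add the volume term. There is a genuine gap, though. The step you leave open, and assert in your Piece 2 bullet, is exactly where the argument fails for $q\neq2$, and the paper's ``following Cherif'' passes over the same point.

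Carry out your own suggestion with $\sigma=\norm{\tau_p(\phi)}^{q-2}\tau_p(\phi)$ and $\theta=\norm{\d\phi}^{p-2}\langle\d\phi,\sigma\rangle$:
\begin{itemize}
\item Piece 1 contributes $-\langle\delta g,\langle\na(\norm{\d\phi}^{p-2}\d\phi),\sigma\rangle\rangle$, including the $\d\norm{\d\phi}^{p-2}\otimes\d\phi$ part you omitted. It needs no integration by parts: it cancels against the matching part of $\na\theta$ from Piece 2. So the terms $\langle\d\phi(X),\na_Y\sigma\rangle$ come from Piece 2 only; as you describe it, they would be counted twice.
\item Piece 3 contributes $-\int_D\delta(\norm{\d\phi}^{p-2})\langle\d\phi,\na\sigma\rangle$.
\item The only terms in $\norm{\tau_p(\phi)}^q$ are $-\frac12(\mathrm{div}\,\theta)\,g$, where $\mathrm{div}\,\theta=\norm{\tau_p(\phi)}^q+\norm{\d\phi}^{p-2}\langle\d\phi,\na\sigma\rangle$, and the volume term $\frac1{2q}\norm{\tau_p(\phi)}^q g$.
\end{itemize}
Altogether $\frac{\d}{\d t}E_{p,q}=\frac12\int_D\langle S,\delta g\rangle\,\d v_g$ with
\begin{equation*}
\begin{split}
S(X,Y)=\;&\norm{\d\phi}^{p-2}\big(\langle\d\phi(X),\na_Y\sigma\rangle+\langle\d\phi(Y),\na_X\sigma\rangle\big)-\norm{\d\phi}^{p-2}\langle\d\phi,\na\sigma\rangle\,g(X,Y)\\
&+\Big(\frac1q-1\Big)\norm{\tau_p(\phi)}^{q}\,g(X,Y)+(p-2)\norm{\d\phi}^{p-4}\langle\d\phi,\na\sigma\rangle\,\langle\d\phi(X),\d\phi(Y)\rangle .
\end{split}
\end{equation*}

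So the $g$-coefficient is $(\frac1q-1)\norm{\tau_p(\phi)}^q$. That is already the final value; nothing is left for it to combine with, and it equals $-\frac12\norm{\tau_p(\phi)}^q$ only when $q=2$. Also, $\na_Y\sigma=\norm{\tau_p(\phi)}^{q-2}\na_Y\tau_p(\phi)+Y\big(\norm{\tau_p(\phi)}^{q-2}\big)\tau_p(\phi)$, and the second summand is absorbed nowhere. Hence the weight cannot be pulled out in front.

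Since $\delta g$ is an arbitrary compactly supported symmetric tensor, the first variation determines the tensor pointwise. Take a point where $\norm{\tau_p(\phi)}$ has a positive critical value, for instance a maximum point of a map with $\tau_p(\phi)\not\equiv0$ on a closed manifold. There the printed $S_{2,p,q}$ differs from the true one by $(\frac1q-\frac12)\norm{\tau_p(\phi)}^q g\neq0$. This holds even after correcting the typo $\norm{\tau_p(\phi)}^{p-4}\to\norm{\d\phi}^{p-4}$ in its last term.

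Thus the theorem as written holds only for $q=2$, and no bookkeeping, yours or the paper's, will produce it for general $q$. What your argument actually proves is the corrected formula above. Appealing to how $B^\prime_\tau$ sits outside $S_{2,L}$ does not rescue this: the same differentiation of the weight and the same volume-term mismatch occur there.
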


\begin{proof}By standard process, we can show 
	\begin{equation}
		\begin{split}
			\left.\frac{d}{dt}E_{2,p,q}(\phi;D)\right|_{t=0}
			=& \int_D \delta  \frac{1}{q}	 \|\tau_p(\phi)\|^q\d v
			+ \int_D \frac{1}{q}	 \|\tau_p(\phi)\|^q,\delta(v_{g_t})\\
			=& \int_D   \frac{1}{2} \|\tau_p(\phi)\|^{q-2}	 \delta\|\tau_p(\phi)\|^2\d v
			+ \int_D \frac{1}{q}	 \|\tau_p(\phi)\|^q,\delta(v_{g_t})..
		\end{split}
	\end{equation}
	
Noticing (cf. Cherif \cite{zbMATH07894526})  has shown that 
	
	\begin{equation}
		\begin{split}
			\delta\|\tau_p(\phi)\|^2=&-(p-2)\norm{\d \phi}^{p-4}\left\langle  \phi^*h,\delta g \right\rangle \left\langle \tau(\phi), \tau_p(\phi)\right\rangle \\
			&-2\norm{\d \phi}^{p-2}\left\langle h( \na \d \phi,\tau_p(\phi)) , \delta g \right\rangle -2\norm{\d \phi}^{p-2}\left\langle \d \phi(\xi), \tau_p(\phi) \right\rangle \\
			&-(p-2)(p-4)\norm{\d \phi}^{p-5}\left\langle  \phi^*h,\delta g \right\rangle \left\langle \d \phi (\na |\d \phi|) , \tau_p(\phi) \right\rangle \\
			&-2(p-2)\norm{\d \phi}^{p-3} \left\langle \d |\d \phi|\otimes h(\d \phi, \tau_p(\phi)) ,\delta g \right\rangle \\
			&-(p-2)\norm{\d \phi}^{p-4} \left\langle  \d \phi (\na \left\langle  \phi^*h,\delta g \right\rangle  ),\tau_p\right\rangle 
		\end{split}
	\end{equation}
Following the proof of  Cherif\cite{zbMATH07894526}, we are done.
	
\end{proof}

\section{$p$-biharmonic map}	
 One can refer to (cf.[cf.\cite{zbMATH07894526}]) for the stress-energy tensor  associated with $p$-biharmonic map ,which is given by  
\begin{equation}
	\begin{split}
			S_{2,p}(X,Y)=&-\big(\frac{1}{2}\|\tau_p(\phi)\|^{2}+\norm{\d \phi}^{p-2}\langle d\phi,\na\tau_p(\phi)\rangle\big)g(X,Y)\\
		&+\norm{\d \phi}^{p-2}\langle d\phi(X),\na_Y\tau_p(\phi)\rangle+\norm{\d \phi}^{p-2}\langle d\phi(Y),\na_X\tau_p(\phi)\rangle\\
		&+(p-2)\norm{\tau_p(\phi)}^{p-4}\left\langle \d \phi,\na \tau_p(\phi) \right\rangle\left\langle \d \phi(X),\d \phi(Y) \right\rangle
	\end{split}
\end{equation}

\begin{lem}[cf.\cite{zbMATH07894526}]
	\begin{equation}
		\begin{split}
			\div_g S_{2,p}(X)=-\left\langle \tau_{2,p} , \d \phi(X)  \right\rangle
		\end{split}
	\end{equation}
\end{lem}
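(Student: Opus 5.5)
Fix a point and a local orthonormal frame $\{e_i\}$ that is geodesic at that point, so that $\na_{e_i}e_j=0$ there and every term can be computed pointwise. Write
\[
\div_g S_{2,p}(X)=\sum_i (\na_{e_i}S_{2,p})(e_i,X).
\]
By linearity I will split $S_{2,p}$ into three pieces and differentiate each separately:
\begin{itemize}
\item the trace part, $-\big(\tfrac12\|\tau_p\|^2+\norm{\d\phi}^{p-2}\langle \d\phi,\na\tau_p\rangle\big)g$;
\item the symmetric cross terms, $\norm{\d\phi}^{p-2}\big(\langle \d\phi(X),\na_Y\tau_p\rangle+\langle \d\phi(Y),\na_X\tau_p\rangle\big)$;
\item the last term, proportional to $\langle \d\phi(X),\d\phi(Y)\rangle$.
\end{itemize}
Throughout I read its coefficient as $(p-2)\norm{\d\phi}^{p-4}$, which is what the first-variation computation actually produces; the exponent on $\tau_p$ in the display is presumably a typo.

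\textbf{Trace part.} Its divergence is simply $-X\big(\tfrac12\|\tau_p\|^2+\norm{\d\phi}^{p-2}\langle \d\phi,\na\tau_p\rangle\big)$. The first piece gives $-\langle \na_X\tau_p,\tau_p\rangle$.

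\textbf{Cross terms.} The term $\sum_i e_i\big(\norm{\d\phi}^{p-2}\langle \d\phi(e_i),\na_X\tau_p\rangle\big)$ produces $\langle \tau_p,\na_X\tau_p\rangle$, because $\div(\norm{\d\phi}^{p-2}\d\phi)=\tau_p$. This cancels the first piece of the trace part. It also produces $\norm{\d\phi}^{p-2}\langle \d\phi(e_i),\na_{e_i}\na_X\tau_p\rangle$.

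The other cross term, $\sum_i e_i\big(\norm{\d\phi}^{p-2}\langle \d\phi(X),\na_{e_i}\tau_p\rangle\big)$, gives three contributions:
\begin{itemize}
\item $\langle \d\phi(X),\tr\na(\norm{\d\phi}^{p-2}\na\tau_p)\rangle$;
\item $\norm{\d\phi}^{p-2}\langle (\na_{e_i}\d\phi)(X),\na_{e_i}\tau_p\rangle$;
\item the remaining product-rule term.
\end{itemize}

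\textbf{Curvature step.} Next I expand $X\big(\norm{\d\phi}^{p-2}\langle \d\phi,\na\tau_p\rangle\big)$ and commute derivatives:
\[
\na_{e_i}\na_X\tau_p-\na_X\na_{e_i}\tau_p=R^N(\d\phi(e_i),\d\phi(X))\tau_p ,
\]
together with the symmetry $(\na_{e_i}\d\phi)(X)=(\na_X\d\phi)(e_i)$. After these substitutions the second-fundamental-form terms cancel pairwise. The surviving curvature term, $\norm{\d\phi}^{p-2}\langle R^N(\d\phi(e_i),\d\phi(X))\tau_p,\d\phi(e_i)\rangle$, is rewritten by the symmetries of $R^N$ as $\langle \d\phi(X),\norm{\d\phi}^{p-2}R^N(\tau_p,\d\phi(e_i))\d\phi(e_i)\rangle$. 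This matches the curvature term of $\tau_{2,p}$ up to sign.

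There is one leftover from the trace part: $-X(\norm{\d\phi}^{p-2})\langle \d\phi,\na\tau_p\rangle$. This is exactly $-(p-2)\norm{\d\phi}^{p-4}\langle \na_X\d\phi,\d\phi\rangle\langle \d\phi,\na\tau_p\rangle$.

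\textbf{Last term.} Differentiating $(p-2)\norm{\d\phi}^{p-4}\langle \d\phi,\na\tau_p\rangle\langle \d\phi(e_i),\d\phi(X)\rangle$ gives:
\begin{itemize}
\item $\langle \d\phi(X),\tr\na(\langle\na\tau_p,\d\phi\rangle\norm{\d\phi}^{p-4}\d\phi)\rangle$, which is the third term of $\tau_{2,p}$;
\item $(p-2)\norm{\d\phi}^{p-4}\langle \d\phi,\na\tau_p\rangle\langle \d\phi(e_i),(\na_{e_i}\d\phi)(X)\rangle$.
\end{itemize}
Using the symmetry of $\na\d\phi$ once more, the second item equals $(p-2)\norm{\d\phi}^{p-4}\langle \d\phi,\na\tau_p\rangle\langle \na_X\d\phi,\d\phi\rangle$, so it cancels the leftover from the trace part.

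Collecting everything leaves $-\langle \tau_{2,p},\d\phi(X)\rangle$, with $\tau_{2,p}=\tau_{2,p,2}$ as given in the $p$-$q$ theorem.

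\textbf{Main obstacle.} The hard part is the bookkeeping that makes the Hessian terms cancel. The terms $\langle (\na_{e_i}\d\phi)(X),\na_{e_i}\tau_p\rangle$ from the cross term and $\langle \na_X\d\phi,\na\tau_p\rangle$ from the trace part must cancel exactly, and the sign conventions for $R^N$ and $\tau_{2,p}$ must be consistent. I would first check the case $p=2$, where the formula reduces to Jiang's identity $\div S_2=-\langle\tau_2,\d\phi\rangle$, and then track the extra $(p-2)$ terms separately.
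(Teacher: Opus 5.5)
Your computation is correct. With the last coefficient read as $(p-2)\norm{\d\phi}^{p-4}$, the cancellations happen exactly as you describe:
\begin{itemize}
\item the two $\langle\tau_p,\na_X\tau_p\rangle$ terms cancel;
\item the two terms containing $\na\d\phi$ paired with $\na\tau_p$ cancel by symmetry of $\na\d\phi$;
\item $-X(\norm{\d\phi}^{p-2})\langle\d\phi,\na\tau_p\rangle$ cancels against the piece of the last term in which the derivative falls on $\d\phi(X)$;
\item the two second derivatives of $\tau_p$ combine, via the Ricci identity with $R(X,Y)=[\na_X,\na_Y]-\na_{[X,Y]}$, into $\norm{\d\phi}^{p-2}\tr R^N(\tau_p,\d\phi)\d\phi$.
\end{itemize}
What remains is $-\langle\tau_{2,p,2},\d\phi(X)\rangle$.

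The paper gives no argument of its own here; it only cites Cherif. Your proposal is therefore a self-contained version of the standard geodesic-frame verification: Jiang's $p=2$ computation, with the weight $\norm{\d\phi}^{p-2}$ and the extra $(p-2)$-term tracked. Doing it yourself makes the needed correction visible. With the literal factor $\norm{\tau_p(\phi)}^{p-4}$, neither the cancellation against $X(\norm{\d\phi}^{p-2})\langle\d\phi,\na\tau_p\rangle$ nor the match with the third term of $\tau_{2,p}$ holds in general unless $p\in\{2,4\}$. The paper only uses $p=4$, where both readings give the constant factor $2$, so the typo is harmless there.

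One small bookkeeping fix. The cross term $\sum_i e_i\bigl(\norm{\d\phi}^{p-2}\langle\d\phi(X),\na_{e_i}\tau_p\rangle\bigr)$ produces only two contributions:
\begin{itemize}
\item $\langle\d\phi(X),\tr\na(\norm{\d\phi}^{p-2}\na\tau_p)\rangle$;
\item $\norm{\d\phi}^{p-2}\langle(\na_{e_i}\d\phi)(X),\na_{e_i}\tau_p\rangle$.
\end{itemize}
The derivative of $\norm{\d\phi}^{p-2}$ is already inside the first, so your ``remaining product-rule term'' must not be counted again. Any term involving $\na_{e_i}X$ drops out by tensoriality, or by choosing $\na X=0$ at the point. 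As written, a third nonzero contribution would spoil the final collection, so state explicitly that it is absent.
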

A routine computation gives 
	$$
\begin{aligned}
\Tr(S_{2,p})=&-m\big(\frac{1}{2}\|\tau_p(\phi)\|^{2}+\norm{\d \phi}^{p-2}\langle d\phi,\na\tau_p(\phi)\rangle\big)\\
&+\norm{\d \phi}^{p-2}\langle d\phi,\na\tau_p(\phi)\rangle+\norm{\d \phi}^{p-2}\langle d\phi,\na\tau_p(\phi)\rangle\\
&+(p-2)\norm{\tau_p(\phi)}^{p-4}\left\langle \d \phi,\na \tau_p(\phi) \right\rangle\norm{\na \phi}^2	
\end{aligned}
$$

 By adapting the proof  in \cite{MR4674270}  \cite{MR4993852}, it is not hard to get
\begin{lem}\label{im-1}[cf.\cite{zbMATH07894526}]
	Assume that $(M,g,f)$ is a gradient Ricci soliton and
	let $\phi\colon M\to N$ be a smooth biharmonic map.
	For $\eta\in C^\infty(M)$ compactly supported, then 
\begin{equation}\label{13}
	\begin{split}
			&\int_M\eta^2\big(\lambda(m-4)-\operatorname{Scal}^M\big)\|\tau_p(\phi)\|^2\dv\\
			&+4\int_M\eta^2\operatorname{Ric}^M(e_i,e_j)\langle\na \left(\norm{d\phi}^{p-2} d\phi \right)(e_i,e_j),\tau_p(\phi)\rangle \dv \\
		&+\int_M 2(p-2)\norm{\tau_p(\phi)}^{p-4}\left\langle \d \phi,\na \tau_p(\phi) \right\rangle\left\langle \d \phi(e_i),\d \phi(e_j) \right\rangle\Ric_{ij}\\&-\int_M \eta^2(p-2)\la\norm{\tau_p(\phi)}^{p-4}\left\langle \d \phi,\na \tau_p(\phi) \right\rangle\norm{\na \phi}^2 \dv\\
		=&
		-4\int_M\norm{\d \phi}^{p-2}\langle d\phi(\operatorname{Ric}^M(\nabla\eta^2)),\tau_p(\phi)\rangle \dv 
		+4\lambda\int_M\langle d\phi(\nabla\eta^2),\tau_p(\phi)\rangle \dv 
		\\
		&-\int_M\langle\nabla\eta^2,\nabla f\rangle\|\tau_p(\phi)\|^2 \dv +2\int_M(\Delta\eta^2)\norm{\d \phi}^{p-2}\langle d\phi(\nabla f),\tau_p(\phi)\rangle \dv 
		\\&+4\int_M(\nabla_{e_i}\eta^2)\nabla_{e_j}f\norm{\d \phi}^{p-2}\langle\na d\phi(e_i,e_j),\tau_p(\phi)\rangle \dv,
	\end{split}
\end{equation}
	where $\{e_i\},i=1,\ldots,m$ represents an orthonormal basis of $TM$.
\end{lem}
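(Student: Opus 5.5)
The plan is to follow the strategy of Branding \cite{MR4674270} for biharmonic maps. We test the conservation law $\div_g S_{2,p}(X)=-\langle\tau_{2,p},\d\phi(X)\rangle$ against the vector field $X=\eta^2\nabla f$. Since $\phi$ is $p$-biharmonic, $\tau_{2,p}=0$, so $S_{2,p}$ is divergence free. For compactly supported $\eta$, the divergence theorem then gives
\begin{equation*}
0=\int_M \langle S_{2,p},\nabla(\eta^2\nabla f)\rangle\dv
=\int_M \eta^2\langle S_{2,p},\na^2 f\rangle\dv+\int_M \langle S_{2,p},\nabla\eta^2\otimes\nabla f\rangle\dv .
\end{equation*}
We then use the soliton equation \eqref{eq:soliton} with $X=\na f$, i.e.\ $\na^2 f=\frac12(\lambda g-\Ric^M)$ up to the normalization of $L_Xg$. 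This turns the first integral into $\frac{\lambda}{2}\int\eta^2\Tr(S_{2,p})-\frac12\int\eta^2\langle S_{2,p},\Ric^M\rangle$.

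Next we expand each piece with the explicit form of $S_{2,p}$. Inserting the trace formula computed just before the lemma, the $-\frac{m}{2}\|\tau_p\|^2$ term gives $-\frac{\lambda m}{4}\eta^2\|\tau_p\|^2$. The Ricci contraction of the $-\frac12\|\tau_p\|^2 g$ term gives $\frac14\eta^2\Scal^M\|\tau_p\|^2$. The terms involving $\norm{\d\phi}^{p-2}\langle d\phi,\na\tau_p\rangle$ will be integrated by parts, moving the derivative off $\tau_p$ and onto $\eta^2\norm{\d\phi}^{p-2}\d\phi$. This produces $-\|\tau_p\|^2$ (because $\div(\norm{\d\phi}^{p-2}\d\phi)=\tau_p$), terms with $\na(\norm{d\phi}^{p-2}d\phi)(e_i,e_j)$ contracted against $\Ric_{ij}$, and boundary-type terms carrying $\nabla\eta^2$. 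Collecting the multiples of $\|\tau_p\|^2$ should give exactly $\lambda(m-4)-\Scal^M$ after multiplying through by $-4$. The $(p-2)$ terms are left un-integrated; their $g$ and $\Ric$ contractions give the third and fourth integrals on the left of \eqref{13}.

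For the cutoff terms, we expand $\langle S_{2,p},\nabla\eta^2\otimes\nabla f\rangle$ and integrate by parts once more where a derivative of $\tau_p$ appears. When derivatives land on $\nabla f$, we replace $\na^2 f$ again via the soliton equation. This is the source of the terms $-4\int\norm{\d\phi}^{p-2}\langle d\phi(\Ric^M(\nabla\eta^2)),\tau_p\rangle$ and $4\lambda\int\langle d\phi(\nabla\eta^2),\tau_p\rangle$, as well as the $\Delta\eta^2$ term and the term with $\na d\phi(e_i,e_j)$. The term $-\int\langle\nabla\eta^2,\nabla f\rangle\|\tau_p\|^2$ comes directly from the $g$-part of $S_{2,p}$.

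The main obstacle is the bookkeeping in these integrations by parts. We must track which derivatives fall on $\norm{\d\phi}^{p-2}$ rather than on $\d\phi$, and use the contracted Bianchi identity for solitons, $\div\Ric^M=\frac12 d\Scal^M$ together with $\Ric^M(\nabla f)=\frac12\nabla\Scal^M$ from Lemma \ref{17}, to cancel the derivative-of-curvature terms that appear when $\div$ hits $\Ric^M$. We would first verify the case $p=2$ against Branding's identity, and then carry along the extra $\norm{\d\phi}^{p-2}$ factors and the $(p-2)$ terms, leaving the latter unsimplified as they appear in \eqref{13}.
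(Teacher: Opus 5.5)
Your route is the paper's route, which is Branding's argument with $S_{2,p}$ in place of $S_2$. You test $\div S_{2,p}=0$ against $\eta^2\na f$, split $\na^2 f$ with the soliton equation, and integrate by parts every $\langle\sigma,\na\tau_p(\phi)\rangle$, where $\sigma=\norm{\d\phi}^{p-2}\d\phi$. You are right to take $\phi$ $p$-biharmonic. For $\div S_{2,p}=-\langle\tau_{2,p},\d\phi\rangle$ to hold, the last term of $S_{2,p}$ must carry $\norm{\d\phi}^{p-4}$, not $\norm{\tau_p(\phi)}^{p-4}$.

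The normalization has to be $\Ric^M+\na^2f=\lambda g$. This is what the identity $\operatorname{Scal}^M+\norm{\na f}^2-2\lambda f=C$ and the substitution $\Delta f=m\lambda-\operatorname{Scal}^M$ in \eqref{12} presuppose. With your $\na^2f=\frac12(\lambda g-\Ric^M)$ you are in effect working with $2f$. Every term of \eqref{13} containing $\na f$ then comes out doubled, and $\Ric^M(\na f)=\frac14\na\operatorname{Scal}^M$ rather than $\frac12\na\operatorname{Scal}^M$. You do not need that identity anyway: the two $\na\operatorname{Scal}^M$ terms cancel by $\div\Ric^M=\frac12\d\operatorname{Scal}^M$ alone.

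For the part of $S_{2,p}$ without the factor $p-2$, the plan works and yields the first two integrals on the left of \eqref{13}. The step that fails is your claim that the $(p-2)$ part produces the third and fourth ones. Write $T(X,Y)=(p-2)\norm{\d\phi}^{p-4}\langle\d\phi,\na\tau_p(\phi)\rangle\langle\d\phi(X),\d\phi(Y)\rangle$. With $\na^2f=\lambda g-\Ric^M$, the total contribution of $T$ to $2\int_M\eta^2\langle S_{2,p},\na^2f\rangle\dv+2\int_M S_{2,p}(\na\eta^2,\na f)\dv=0$ is
\begin{equation*}
2(p-2)\int_M\norm{\d\phi}^{p-4}\langle\d\phi,\na\tau_p(\phi)\rangle\left(\lambda\eta^2\norm{\d\phi}^2-\eta^2\Ric_{ij}\langle\d\phi(e_i),\d\phi(e_j)\rangle+\langle\d\phi(\na\eta^2),\d\phi(\na f)\rangle\right)\dv ,
\end{equation*}
and it sits on the left-hand side with the same sign as $\int_M\eta^2(\lambda(m-4)-\operatorname{Scal}^M)\norm{\tau_p(\phi)}^2\dv$. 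Compare this with \eqref{13}:
\begin{itemize}
\item the Ricci term there has the opposite sign and no $\eta^2$;
\item the $\lambda$-term has coefficient $-(p-2)$ instead of $+2(p-2)$;
\item the cutoff term $T(\na\eta^2,\na f)$ is absent.
\end{itemize}

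Likewise, if you carry out the bookkeeping you mention, the $\sigma$-part of the cutoff terms gives $4\lambda\int_M\langle\sigma(\na\eta^2),\tau_p(\phi)\rangle\dv$ and $2\int_M\langle(\na_{e_i}\sigma)(e_j),\tau_p(\phi)\rangle(\na_{e_i}\eta^2\,\na_{e_j}f+\na_{e_j}\eta^2\,\na_{e_i}f)\dv$. These differ from the corresponding terms of \eqref{13} in two ways: its $\lambda$-term lacks the factor $\norm{\d\phi}^{p-2}$, and its last term omits the pieces where the derivative falls on $\norm{\d\phi}^{p-2}$.

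So for $p\neq 2$ your computation does not end at \eqref{13}, and neither does the paper's. Its \eqref{10} and \eqref{11}, combined through $\na^2f=\lambda g-\Ric^M$ and doubled, already give the coefficients $+2(p-2)\lambda$ and $-2(p-2)$. Its \eqref{12} drops the $(p-2)$ part of $S_{2,p}(\na f,\na\eta^2)$ that is written out just before \eqref{5}. What your argument actually proves is the corrected identity containing the terms above, and you should state that rather than assert agreement with \eqref{13}.
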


\begin{proof}
	As $\eta$ has compact support,
\begin{equation}\label{9}
	\begin{split}
			0=-\int_M\eta^2\langle\nabla f,\operatorname{div} S_{2,p}\rangle \dv
		=\int_M\eta^2\langle\nabla^2f,S_{2,p}\rangle \dv
		+\int_M S_{2,p}(\nabla f,\nabla\eta^2) \dv.
	\end{split}
\end{equation}
By the definiton of gradient Ricci soliton,  we deal with the  term $\int_M\eta^2\langle\nabla^2f,S_{2,p}\rangle \dv$
  by $\int_M \left\langle g,S_{2,p} \right\rangle$ and  $\int_M \left\langle \Ric^M,S_{2,p} \right\rangle$  respectively.
\begin{equation}\label{10}
	\begin{split}
		&\int_M\eta^2\langle g,S_{2,p}\rangle \dv\\
		=&\int_M\eta^2\tr_gS_{2,p} \dv \\
		=&\int_M\eta^2\bigg(-m\big(\frac{1}{2}\|\tau_p(\phi)\|^{2}+\norm{\d \phi}^{p-2}\langle d\phi,\na\tau_p(\phi)\rangle\big)\\
		&+\norm{\d \phi}^{p-2}\langle d\phi,\na\tau_p(\phi)\rangle+\norm{\d \phi}^{p-2}\langle d\phi,\na\tau_p(\phi)\rangle\\
		&+(p-2)\norm{\tau_p(\phi)}^{p-4}\left\langle \d \phi,\na \tau_p(\phi) \right\rangle\norm{\na \phi}^2\bigg) \dv\\
		=&\big(2-\frac{m}{2}\big)\int_M\eta^2\|\tau_p(\phi)\|^2 \dv
		+(2-m)\int_M\langle d\phi(\nabla\eta^2),\tau_p(\phi)\rangle \dv.\\
		&+\int_M \eta^2(p-2)\norm{\tau_p(\phi)}^{p-4}\left\langle \d \phi,\na \tau_p(\phi) \right\rangle\norm{\na \phi}^2\bigg) \dv
	\end{split}
\end{equation}
	Moreover, a similar calculation shows
\begin{equation}\label{11}
	\begin{split}
		&\int_M\eta^2\langle\operatorname{Ric}^M,S_{2,p}\rangle \dv\\
		=&\int_M-\eta^2\operatorname{Scal}^M \big(\frac{1}{2}\|\tau_p(\phi)\|^{2}+\norm{\d \phi}^{p-2}\langle d\phi,\na\tau_p(\phi)\rangle\big)\\
		&+\Ric_{ij}\norm{\d \phi}^{p-2}\langle d\phi(e_i),\na_{e_j}\tau_p(\phi)\rangle+\norm{\d \phi}^{p-2}\langle d\phi(e_j),\na_{e_i}\tau_p(\phi)\rangle\\
		&+(p-2)\norm{\tau_p(\phi)}^{p-4}\left\langle \d \phi,\na \tau_p(\phi) \right\rangle\left\langle \d \phi(e_i),\d \phi(e_j)\Ric_{ij} \right\rangle\\
		=&-\frac{1}{2}\int_M\eta^2\operatorname{Scal}^M\|\tau_p(\phi)\|^2 \dv
		+2\int_M\eta^2\operatorname{Ric}^M(e_i,e_j)\langle\na \left( \norm{\d \phi}^{p-2}d\phi(e_i,e_j) \right),\tau_p(\phi)\rangle \dv\\
		&+2\int_M\norm{\d \phi}^{p-2}\eta^2\langle d\phi\big(\operatorname{div}\operatorname{Ric}^M-\frac{1}{2}\nabla \operatorname{Scal}^M\big),\tau_p(\phi)\rangle \dv \\
		&-\int_M\norm{\d \phi}^{p-2} \operatorname{Scal}^M
		\langle d\phi(\nabla\eta^2),\tau_p(\phi)\rangle \dv
		+2\int_M\norm{\d \phi}^{p-2}\langle d\phi(\operatorname{Ric}^M(\nabla\eta^2)),\tau_p(\phi)\rangle \dv.\\
		&+\int_M (p-2)\norm{\tau_p(\phi)}^{p-4}\left\langle \d \phi,\na \tau_p(\phi) \right\rangle\left\langle \d \phi(e_i),\d \phi(e_j) \right\rangle\Ric_{ij}
	\end{split}
\end{equation}
	
Let $\{e_i\},i=1,\ldots,m$ be an orthonormal basis of $TM$ that satisfies 
	$\nabla_{e_i}e_j=0,i,j=1,\ldots,m$ at a fixed point $p\in M$.
	\begin{align*}
		&S_{2,p}(\nabla f,\nabla\eta^2)\\
		=-&\langle\nabla\eta^2,\nabla f\rangle\big(\frac{1}{2}\|\tau_p(\phi)\|^2
		+\norm{\d \phi}^{p-2}\langle d\phi,\na\tau_p(\phi)\rangle\big)\\
		&+\norm{\d \phi}^{p-2}(\nabla_{e_i}\eta^2)\nabla_{e_j}f\langle d\phi(e_i),\na_{e_j}\tau_p(\phi)\rangle \\
		&+\norm{\d \phi}^{p-2}(\nabla_{e_j}\eta^2)\nabla_{e_i}f\langle d\phi(e_i),\na_{e_j}\tau_p(\phi)\rangle.\\
		&+(p-2)\norm{\tau_p(\phi)}^{p-4}\left\langle \d \phi,\na \tau_p(\phi) \right\rangle\left\langle \d \phi(\nabla f),\d \phi(\nabla\eta^2) \right\rangle
	\end{align*}
	
	A direct calculation using integration by parts yields
\begin{equation}\label{5}
	\begin{split}
			&-\int_M\langle\nabla\eta^2,\nabla f\rangle\big(\frac{1}{2}\|\tau_p(\phi)\|^2
		+\norm{\d \phi}^{p-2}\langle d\phi,\na\tau_p(\phi)\rangle\big) \dv\\
		=&\frac{1}{2}\int_M\langle\nabla\eta^2,\nabla f\rangle\|\tau_p(\phi)\|^2 \dv \\
		&+\int_M(\nabla_{e_j}\nabla_{e_i}\eta^2)\nabla_{e_i}f\norm{\d \phi}^{p-2}\langle d\phi(e_j),\tau_p(\phi)\rangle \dv\\
		&+\int_M(\nabla_{e_i}\eta^2)\nabla_{e_j}\nabla_{e_i}f\norm{\d \phi}^{p-2}\langle d\phi(e_j),\tau_p(\phi)\rangle \dv.
	\end{split}
\end{equation}
	
	Moreover, using integration by parts once more we find
\begin{equation}\label{6}
	\begin{split}
		&\int_M(\nabla_{e_i}\eta^2)\nabla_{e_j} f\norm{\d \phi}^{p-2} \langle d\phi(e_i),\na_{e_j}\tau_p(\phi)\rangle \dv\\ 
		=&-\int_M(\nabla_{e_j}\nabla_{e_i}\eta^2)\nabla_{e_j} f\langle d\phi(e_i),\tau_p(\phi)\rangle \dv \\
		&-\int_M(\nabla_{e_i}\eta^2)\Delta f\langle d\phi(e_i),\tau_p(\phi)\rangle \dv \\
		&-\int_M(\nabla_{e_i}\eta^2)\nabla_{e_j} f\langle\na d\phi(e_i,e_j),\tau_p(\phi)\rangle \dv
	\end{split}
\end{equation}
	and 
\begin{equation}\label{7}
	\begin{split}
			&\int_M(\nabla_{e_j}\eta^2)\nabla_{e_i} f\norm{\d \phi}^{p-2}\langle d\phi(e_i),\na_{e_j}\tau_p(\phi)\rangle \dv \\
		=&-\int_M(\Delta\eta^2)\nabla_{e_i} f\langle d\phi(e_i),\tau_p(\phi)\rangle \dv \\
		&-\int_M(\nabla_{e_j}\eta^2)\nabla_{e_j}\nabla_{e_i}f
		\langle d\phi(e_i),\tau_p(\phi)\rangle \dv \\
		&-\int_M(\nabla_{e_j}\eta^2)\nabla_{e_i} f\langle\na(\norm{\d \phi}^{p-2}\nabla d\phi)(e_i,e_j),\tau_p(\phi)\rangle \dv.
	\end{split}
\end{equation}
	
	Adding up both contributions yields
\begin{equation}\label{8}
	\begin{split}
		&\int_M(\nabla_{e_i}\eta^2)\nabla_{e_j}f\langle d\phi(e_i),\na_{e_j}\tau_p(\phi)\rangle \dv 
		+\int_M(\nabla_{e_j}\eta^2)\nabla_{e_i}f\langle d\phi(e_i),\na_{e_j}\tau_p(\phi)\rangle \dv \\
		=&-\int_M(\nabla_{e_j}\nabla_{e_i}\eta^2)\nabla_{e_j}f\langle d\phi(e_i),\tau_p(\phi)\rangle \dv
		-\int_M(\nabla_{e_i}\eta^2)\Delta f\langle d\phi(e_i),\tau_p(\phi)\rangle \dv \\
		&-2\int_M(\nabla_{e_i}\eta^2)\nabla_{e_j}f\langle\na (\norm{\d \phi}^{p-2}\d\phi)(e_i,e_j),\tau_p(\phi)\rangle \dv\\
		&-\int_M(\Delta\eta^2)\nabla_{e_i}f\norm{\d \phi}^{p-2}\langle d\phi(e_i),\tau_p(\phi)\rangle \dv \\
		&-\int_M(\nabla_{e_j}\eta^2)\nabla_{e_j}\nabla_{e_i}f \norm{\d \phi}^{p-2} \langle d\phi(e_i),\tau_p(\phi)\rangle \dv.
	\end{split}
\end{equation}
	
By \eqref{5}\eqref{6}\eqref{7}\eqref{8}
\begin{equation}\label{12}
	\begin{split}
			&\int_M S_{2,p}(\nabla f,\nabla\eta^2)~\dv\\
		=&		-\frac{1}{2}\int_M\langle\nabla\eta^2,\nabla f\rangle\|\tau_p(\phi)\|^2 \dv 
		+\int_M(\Delta\eta^2)\langle d\phi(\nabla f),\tau_p(\phi)\rangle \dv \\
		\nonumber&+\int_M\Delta f\langle d\phi(\nabla\eta^2),\tau_p(\phi)\rangle \dv  
		+2\int_M(\nabla_{e_i}\eta^2)\nabla_{e_j}f\langle\na d\phi(e_i,e_j),\tau_p(\phi)\rangle \dv\\
		=&-\frac{1}{2}\int_M\langle\nabla\eta^2,\nabla f\rangle\|\tau_p(\phi)\|^2 \dv 
		+\int_M(\Delta\eta^2)\langle d\phi(\nabla f),\tau_p(\phi)\rangle \dv \\
		\nonumber&+\int_M(m\lambda-\operatorname{Scal}^M)\langle d\phi(\nabla\eta^2),\tau_p(\phi)\rangle \dv \\
		&+2\int_M(\nabla_{e_i}\eta^2)\nabla_{e_j}f\langle\na d\phi(e_i,e_j),\tau_p(\phi)\rangle \dv,
	\end{split}
\end{equation}
	where we used the equation for a gradient Ricci soliton \eqref{eq:soliton} 
	in the second step. The statements follows from \eqref{9}\eqref{10}\eqref{11}\eqref{12}.
\end{proof}

\begin{lem}\label{15}
	Assume that $(M,g,f)$ is a complete, non-compact gradient Ricci soliton
	with $\operatorname{Ric}^M\leq C$ and $\|\nabla f\|<\infty$.
	Let $\phi\colon M\to N$ be a smooth $p$-biharmonic map with 
	\begin{equation}\label{14}
		\int_M(\|d\phi\|^{4p-6}+\|\na d\phi\|^2)\dv+\int_M\norm{\d \phi}^{2p-4} \|\na d\phi\|^2 \dv<\infty.
	\end{equation}
	Then the following inequality holds
	\begin{equation}\label{20}
		\begin{split}
			&\int_M\big(\lambda(m-4)-\operatorname{Scal}^M\big)\|\tau_p(\phi)\|^2\dv
			+4\int_M\operatorname{Ric}^M(e_i,e_j)\langle\na d\phi(e_i,e_j),\tau_p(\phi)\rangle \dv\\
			&+\int_M (p-2)\norm{\tau_p(\phi)}^{p-4}\left\langle \d \phi,\na \tau_p(\phi) \right\rangle\left\langle \d \phi(e_i),\d \phi(e_j) \right\rangle\Ric_{ij}\\
			&+\int_M (p-2)\norm{\tau_p(\phi)}^{p-4}\left\langle \d \phi,\na \tau_p(\phi) \right\rangle\left\langle \d \phi(e_i),\d \phi(e_j) \right\rangle\Ric_{ij}\\&+\int_M \eta^2(p-2)\norm{\tau_p(\phi)}^{p-4}\left\langle \d \phi,\na \tau_p(\phi) \right\rangle\norm{\na \phi}^2\bigg) \dv\\
			&\leq 0.
		\end{split}
	\end{equation}
\end{lem}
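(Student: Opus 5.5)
The plan is to start from the integral identity of Lemma \ref{im-1} and apply it with a standard cutoff. Fix $x_0\in M$ and, for $R>0$, choose $\eta=\eta_R\in C_c^\infty(M)$ with $0\le\eta\le1$, $\eta\equiv1$ on $B_R(x_0)$, $\eta\equiv0$ outside $B_{2R}(x_0)$, $\|\nabla\eta\|\le C/R$ and $|\Delta\eta^2|\le C/R^2$. Completeness of $M$ lets us do this. If $\Delta$ of the distance function is not controlled, we instead use the Hessian bound available on a soliton with bounded Ricci curvature, or simply replace $\Delta\eta^2$ by an integration by parts that moves one derivative back onto $\tau_p(\phi)$ and $d\phi$.

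With this cutoff, the left-hand side of \eqref{13} becomes the $\eta^2$-weighted version of the left-hand side of \eqref{20}. The claim then reduces to showing that every term on the right-hand side of \eqref{13} tends to $0$ as $R\to\infty$.

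The right-hand side terms are estimated one at a time.
\begin{itemize}
\item Each term carries a factor $\nabla\eta^2$ or $\Delta\eta^2$, hence a factor $C/R$ or $C/R^2$, supported on the annulus $B_{2R}\setminus B_R$.
\item The remaining geometric factors $\operatorname{Ric}^M$, $\lambda$ and $\nabla f$ are bounded by the hypotheses $\operatorname{Ric}^M\le C$ and $\|\nabla f\|<\infty$, or by Lemma \ref{16} in the steady case.
\item This reduces matters to integrability of the analytic factors:
\[
\norm{\d\phi}^{p-2}\,|d\phi|\,|\tau_p(\phi)|,\qquad \|\tau_p(\phi)\|^2,\qquad \norm{\d\phi}^{p-2}\,|\nabla d\phi|\,|\tau_p(\phi)|.
\]
\item Write $\tau_p(\phi)=\norm{\d\phi}^{p-2}\tau(\phi)+(p-2)\norm{\d\phi}^{p-4}\langle\nabla d\phi,d\phi\rangle d\phi$. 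Then $|\tau_p(\phi)|\le C\norm{\d\phi}^{p-2}|\nabla d\phi|$, so $\|\tau_p\|^2\le C\norm{\d\phi}^{2p-4}|\nabla d\phi|^2$, which is integrable by \eqref{14}.
\item By Cauchy--Schwarz, $\norm{\d\phi}^{p-1}|\tau_p|\le \tfrac12\norm{\d\phi}^{2p-2}+\tfrac12|\tau_p|^2$. The exponent $2p-2$ should be controlled by interpolating between $\norm{\d\phi}^{4p-6}$ and the quantities already known to be integrable. The author's choice of $4p-6$ suggests pairing $\norm{\d\phi}^{p-2}\cdot\norm{\d\phi}^{p-2}|\nabla d\phi|$ against $\norm{\d\phi}^{2p-3}$ squared.
\item The Hessian terms are bounded by Young's inequality, $\norm{\d\phi}^{p-2}|\nabla d\phi||\tau_p|\le \norm{\d\phi}^{2p-4}|\nabla d\phi|^2+|\tau_p|^2$.
\end{itemize}
Since all of these quantities lie in $L^1(M)$, dominated convergence together with the prefactors $1/R$ and $1/R^2$ sends the right-hand side of \eqref{13} to $0$.

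On the left-hand side, we let $R\to\infty$ and pass $\eta^2\to1$, again by dominated convergence. This requires the integrands in the Ricci terms, namely $\langle\nabla(\norm{\d\phi}^{p-2}d\phi),\tau_p\rangle$ and $\norm{\tau_p}^{p-4}\langle d\phi,\nabla\tau_p\rangle\langle d\phi(e_i),d\phi(e_j)\rangle$, to be integrable. The last integrand is the main obstacle, because it involves $\nabla\tau_p$, a third-order quantity that \eqref{14} does not control directly.

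My first attempt would be to avoid needing its integrability at all. The idea is to integrate by parts once more, moving the derivative off $\tau_p$ onto $\eta^2$, $\operatorname{Ric}$ and $d\phi$. The pieces hitting $\eta^2$ again vanish as $R\to\infty$. The pieces hitting $\operatorname{Ric}$ are handled through the soliton identity $\operatorname{div}\operatorname{Ric}=\tfrac12\nabla\operatorname{Scal}$ and Lemma \ref{17}. Failing that, I would pass to the limit using Fatou/monotone convergence on whichever sign-definite part appears. With the inequality form ``$\le0$'' this suffices once the right-hand side is shown to vanish in the limit.
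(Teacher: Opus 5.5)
Your treatment of the right-hand side of \eqref{13} is the paper's entire proof. The paper takes the same cutoff, simply assuming $|\nabla^q\eta|\le C/R^q$ for $q=1,2$. It then bounds each of the five terms using $\|\tau_p(\phi)\|\le C_p\|d\phi\|^{p-2}\|\nabla d\phi\|$, $\operatorname{Ric}^M\le C$ and $\|\nabla f\|\le C$, lets $R\to\infty$, and stops.

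Two details in that part need fixing. First, the Cauchy--Schwarz step producing $\|d\phi\|^{2p-2}$ cannot be rescued by interpolation, since \eqref{14} gives no integrable power of $\|d\phi\|$ below $4p-6$. The estimate \eqref{14} is built for is $\|d\phi\|^{p-1}\|\tau_p(\phi)\|\le C\|d\phi\|^{2p-3}\|\nabla d\phi\|\le C\big(\|d\phi\|^{4p-6}+\|\nabla d\phi\|^2\big)$. Second, your fallback of integrating $\Delta\eta^2$ by parts would put a derivative on $\tau_p(\phi)$, which \eqref{14} does not control. So you cannot avoid the Hessian bound on the cutoff that way.

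Where you go beyond the paper, namely removing $\eta^2$ on the left, you have found a real gap that the paper's proof never mentions, but your remedies do not close it. Note also that the cut-off left side of \eqref{13} is not literally that of \eqref{20}: its Ricci term involves $\nabla(\|d\phi\|^{p-2}d\phi)$ rather than $\nabla d\phi$, and its last term carries $-\lambda$. This inconsistency comes from the paper.

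Read $\operatorname{Ric}^M\le C$ as a two-sided bound and the weight $\|\tau_p(\phi)\|^{p-4}$ as the intended $\|d\phi\|^{p-4}$. Then most terms are harmless:
\begin{itemize}
\item the $\|\tau_p(\phi)\|^2$ term and the Ricci term containing $\nabla(\|d\phi\|^{p-2}d\phi)$ are $O(\|d\phi\|^{2p-4}\|\nabla d\phi\|^2)$;
\item the $\lambda$-term integrates by parts exactly, because $\operatorname{div}(\|d\phi\|^{p-2}d\phi)=\tau_p(\phi)$.
\end{itemize}

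The obstruction is the term $(p-2)\|d\phi\|^{p-4}\langle d\phi,\nabla\tau_p(\phi)\rangle\langle d\phi(e_i),d\phi(e_j)\rangle\operatorname{Ric}_{ij}$. Integrating it by parts produces $(\nabla_{e_k}\operatorname{Ric}^M)(e_i,e_j)$ contracted with $d\phi(e_k)$. This is a full covariant derivative, not a divergence, so neither $\operatorname{div}\operatorname{Ric}^M=\tfrac12\nabla\operatorname{Scal}^M$ nor $\operatorname{Scal}^M+\|\nabla f\|^2-2\lambda f=C$ controls it. The integrand also has no sign, so Fatou is unavailable.

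Under an extra hypothesis such as $\|\nabla\operatorname{Ric}^M\|\le C$, your integration by parts would go through. The new $\nabla\operatorname{Ric}^M$ term is then bounded by $C\|d\phi\|^{2p-3}\|\nabla d\phi\|$. The remaining terms are bounded by $C\|d\phi\|^{2p-4}\|\nabla d\phi\|^2$ or by $C/R$ times integrable quantities. With the stated hypotheses, however, this passage to the limit is unproved, in your proposal and in the paper alike.
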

\begin{proof}[Proof of Lemma \ref{15}]
	Again, let  $0\leq\eta\leq 1$ on $M$ be such that
	\begin{equation*}
		\eta(x)=1\textrm{ for } x\in B_R(x_0),\qquad \eta(x)=0\textrm{ for } x\in B_{2R}(x_0),\qquad |\nabla^q\eta|\leq\frac{C}{R^q}\textrm{ for } x\in M,
	\end{equation*}
	where $B_R(x_0)$ denotes the geodesic ball around the point $x_0$ with radius $R$ and $q=1,2$.
	Moreover, let $\{e_i\},i=1,\ldots,m$ be an orthonormal basis of $TM$ that satisfies 
	$\nabla_{e_i}e_j=0,i,j=1,\ldots,m$ at a fixed point $p\in M$.
	
	We need to estimate the terms on 	the right hand side of \eqref{13}. 	By the  assumption \eqref{14},	it is easy to infer
	\begin{align*}
		&\int_M \norm{\d \phi}^{p-2}\langle d\phi(\operatorname{Ric}^M(\nabla\eta^2)),\tau_p(\phi)\rangle \dv\leq\frac{C}{R}\int_M(\|\na d\phi\|^2+\|d\phi\|^{4p-6})\dv \to 0, \\
		&\int_M \norm{\d \phi}^{p-2} \langle d\phi(\nabla\eta^2),\tau_p(\phi)\rangle \dv 
		\leq\frac{C}{R}\int_M(\|\na d\phi\|^2+\|d\phi\|^{4p-6})\dv 
		\to 0,  \\
		&\int_M\langle\nabla\eta^2,\nabla f\rangle\|\tau_p(\phi)\|^2 \dv 
		\leq\frac{C}{R}\int_M \norm{\d \phi}^{2p-4}\|\na d\phi\|^2\dv 
		\to 0, \\
	&\int_M \norm{\d \phi}^{p-2}(\Delta\eta^2)\langle d\phi(\nabla f),\tau_p(\phi)\rangle \dv 
		\leq\frac{C}{R^2}\int_M(\|\na d\phi\|^2+\|d\phi\|^2)\dv 
		\to 0, \\
		&\int_M\norm{\d \phi}^{p-2}(\nabla_{e_i}\eta^2)\nabla_{e_j}f\langle\na d\phi(e_i,e_j),\tau_p(\phi)\rangle 
		\leq\frac{C}{R}\int_M\norm{\d \phi}^{2p-4} \|\na d\phi\|^2 \dv\to 0
	\end{align*}
	as $R\to\infty$.
	Note that we used the inequality $\|\tau_p(\phi)\|\leq C_p \norm{\d \phi}^{p-2}\|\na d\phi\|$ and $ \Ric\leq C, \norm{\na f}\leq C$.
	This completes the proof.
\end{proof}

By Lemma \ref{im-1}, we can get 
\begin{thm}\label{18}
	Assume that $(M,g,f)$ is an gradient Ricci soliton  and
	let $\phi\colon M\to N$ be a smooth $4$-biharmonic map with  condition 
	\begin{equation}\label{18}
		\int_M(\|d\phi\|^{10}+\|\na d\phi\|^2)\dv+\int_M\norm{\d \phi}^{4} \|\na d\phi\|^2 \dv+\int_M\norm{\d \phi}^{7} \|\na d\phi\|^2 \dv<\infty.
	\end{equation}
	 If 
	\begin{equation}
		\begin{split}			
			&4\int_M\eta^2\operatorname{Ric}^M(e_i,e_j)\langle\na \left(\norm{d\phi}^{2} d\phi \right)(e_i,e_j),\tau_4(\phi)\rangle \d v_g \\
			&+\int_M 2\left\langle \d \phi,\na \tau_4(\phi) \right\rangle\left\langle \d \phi(e_i),\d \phi(e_j) \right\rangle\Ric_{ij} \d v_g \geq 0.\\
		\end{split}
	\end{equation}
	and
	\begin{equation}
		\begin{split}
			\operatorname{Scal}^M<\lambda(m-4)
		\end{split}
	\end{equation}
Then, $u$ is $p$-harmonic map.
\end{thm}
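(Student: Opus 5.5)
The argument specializes Lemma~\ref{im-1} to $p=4$ and lets the cut-off radius go to infinity, following the proof of Lemma~\ref{15}.

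\emph{Step 1: specialize the identity.} Fix the cut-off $\eta$ from the proof of Lemma~\ref{15}: $0\leq\eta\leq 1$, $\eta\equiv1$ on $B_R(x_0)$, $\operatorname{supp}\eta\subset B_{2R}(x_0)$, and $|\nabla^q\eta|\leq C/R^q$. Put $p=4$ in \eqref{13}. The coefficient $p-2$ becomes $2$ and the power $\norm{\tau_p}^{p-4}$ becomes $1$. The left-hand side then consists of four parts:
\begin{itemize}
\item the weighted term $\int_M\eta^2(\lambda(m-4)-\operatorname{Scal}^M)\|\tau_4(\phi)\|^2$;
\item the two Ricci terms, which are exactly the expression assumed nonnegative in the hypothesis;
\item a leftover term $-2\lambda\int_M\eta^2\langle d\phi,\na\tau_4(\phi)\rangle\norm{d\phi}^2$.
\end{itemize}

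\emph{Step 2: the right-hand side vanishes.} The right-hand side contains only terms with a derivative of $\eta^2$, multiplied by $\nabla f$, $\operatorname{Ric}^M$ or $\lambda$. Here I will assume, as in Lemma~\ref{15}, that $\operatorname{Ric}^M$ and $\|\nabla f\|$ are bounded; for steady solitons Lemma~\ref{16} gives the gradient bound. Use the pointwise bound $\|\tau_4(\phi)\|\leq C\norm{d\phi}^2\|\na d\phi\|$ and Young's inequality. Each term is then dominated by $\frac{C}{R}$ (or $\frac{C}{R^2}$) times integrals such as
\[
\int_M\bigl(\|d\phi\|^{10}+\|\na d\phi\|^2+\norm{d\phi}^{4}\|\na d\phi\|^2\bigr)\dv .
\]
These are finite by \eqref{18}, since \eqref{18} is \eqref{14} with $p=4$ plus an extra term. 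So the right-hand side tends to $0$ as $R\to\infty$.

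\emph{Step 3: the leftover term (main obstacle).} I would integrate $-2\lambda\int_M\eta^2\langle d\phi,\na\tau_4\rangle\norm{d\phi}^2$ by parts, moving the derivative off $\tau_4$. This gives
\[
2\lambda\int_M\eta^2\|\tau_4(\phi)\|^2 \;+\; \text{terms with }\nabla\eta^2 \;+\; \text{terms with }\nabla\norm{d\phi}^2 .
\]
The $\nabla\eta^2$ terms vanish as $R\to\infty$, as in Step~2. Two things then need checking:
\begin{itemize}
\item the $\|\tau_4\|^2$ contribution combines with the weighted term and the constant $m-4$ still works (otherwise the constant has to be adjusted);
\item the term with $\nabla\norm{d\phi}^2$ is integrable, which should be where the extra hypothesis $\int_M\norm{d\phi}^7\|\na d\phi\|^2<\infty$ is used.
\end{itemize}
Failing this, I would fold this term into the sign hypothesis the way Lemma~\ref{15} does, where it appears on the left of \eqref{20}.

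\emph{Step 4: conclusion.} Letting $R\to\infty$ and dropping the nonnegative Ricci terms gives
\[
\int_M\bigl(\lambda(m-4)-\operatorname{Scal}^M\bigr)\|\tau_4(\phi)\|^2\dv\leq 0 .
\]
The weight is strictly positive because $\operatorname{Scal}^M<\lambda(m-4)$, so $\tau_4(\phi)\equiv0$, i.e.\ $\phi$ is $4$-harmonic.
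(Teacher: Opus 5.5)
Your overall route is the paper's. You specialize the identity of Lemma~\ref{im-1} to $p=4$ (the paper starts from \eqref{20}), kill the cut-off terms with the integrability hypothesis, integrate the leftover non-Ricci term by parts, and let $R\to\infty$. Step~2 is essentially the paper's Lemma~\ref{15}. Assuming bounded $\operatorname{Ric}^M$ and $\|\nabla f\|$ is no worse than what the paper does, since it imports these through Lemma~\ref{15}.

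The gap is Step~3, the only step the paper's proof adds beyond Lemma~\ref{15}, which you leave as two open checks plus a fallback. The missing observation is that $\norm{d\phi}^2 d\phi$ is exactly the field whose divergence defines $\tau_4(\phi)=\div(\norm{d\phi}^2 d\phi)$. So for compactly supported $\eta$,
\begin{equation*}
\int_M\eta^2\norm{d\phi}^2\langle d\phi,\na\tau_4(\phi)\rangle\dv=-\int_M\eta^2\|\tau_4(\phi)\|^2\dv-\int_M\norm{d\phi}^2\langle d\phi(\nabla\eta^2),\tau_4(\phi)\rangle\dv .
\end{equation*}
This is what the paper means by ``integration by parts and the definition of $\tau_4(\phi)$''. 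No separate $\nabla\norm{d\phi}^2$ term arises: if you split $\div(\norm{d\phi}^2 d\phi)$ into $\norm{d\phi}^2\tau(\phi)+d\phi(\nabla\norm{d\phi}^2)$, the two pieces recombine into $\tau_4(\phi)$. Your second check is therefore moot, and the $\norm{d\phi}^7$ hypothesis is not what controls it. The boundary term is $O(1/R)$ by $\|\tau_4(\phi)\|\leq C\norm{d\phi}^2\|\na d\phi\|$ together with integrability of $\norm{d\phi}^{10}$ and $\|\na d\phi\|^2$. Your fallback is not available either: the theorem's sign hypothesis contains only the two Ricci terms, so folding the leftover into it amounts to adding a hypothesis.

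Consequently Step~4 does not follow as written. The leftover term becomes a multiple of $\int_M\eta^2\|\tau_4(\phi)\|^2\dv$, so the weight in front of $\|\tau_4(\phi)\|^2$ changes; the paper's own display after this step also shows a modified weight. With the sign in \eqref{13} that you used, $-2\lambda\int_M\eta^2\norm{d\phi}^2\langle d\phi,\na\tau_4(\phi)\rangle\dv$ equals $2\lambda\int_M\eta^2\|\tau_4(\phi)\|^2\dv$ plus a vanishing term. The limit inequality therefore reads $\int_M(\lambda(m-2)-\operatorname{Scal}^M)\|\tau_4(\phi)\|^2\dv\leq 0$, not the one with weight $\lambda(m-4)-\operatorname{Scal}^M$.

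Since $\lambda(m-2)-\operatorname{Scal}^M=(\lambda(m-4)-\operatorname{Scal}^M)+2\lambda$, the hypothesis $\operatorname{Scal}^M<\lambda(m-4)$ makes this weight positive when $\lambda\geq 0$. In that case your Step~4 is recovered by discarding $2\lambda\int_M\eta^2\|\tau_4(\phi)\|^2\dv\geq 0$. For $\lambda<0$ it does not, and you would need a further argument or a stronger curvature assumption.

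A smaller point: at $p=4$ the second Ricci term in \eqref{13} has coefficient $2(p-2)=4$, while the hypothesis has $2$. That term has no sign, so the Ricci terms are not ``exactly'' the hypothesized expression, and this mismatch must be dealt with too.
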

\begin{proof}
We choose the same cutoff function $\eta$ as in the proof of Lemma \ref{15}. By \eqref{20}, we obtain that 
				\begin{equation}
				\begin{split}
						&\int_M\eta^2\big(\lambda(m-4)-\operatorname{Scal}^M\big)\|\tau_p(\phi)\|^2\dv+
					4\int_M\eta^2\operatorname{Ric}^M(e_i,e_j)\langle\na \left(\norm{d\phi}^{2} d\phi \right)(e_i,e_j),\tau_p(\phi)\rangle \dv \\
					&+\int_M 2\left\langle \d \phi,\na \tau_4(\phi) \right\rangle\left\langle \d \phi(e_i),\d \phi(e_j) \right\rangle\Ric_{ij}\\
					&+\int_M 2\eta^2\left\langle \norm{\na \phi}^2\d \phi,\na \tau_4(\phi) \right\rangle \dv\\
					\leq& 0.					
				\end{split}
			\end{equation}
	By intrgration by parts and the definiton of $\tau_4(\phi)$, we can show that 
		\begin{equation}
		\begin{split}
			&\int_M\eta^2\big(\lambda(m-4)-2\operatorname{Scal}^M\big)\|\tau_p(\phi)\|^2\dv\\
			&+
			4\int_M\eta^2\operatorname{Ric}^M(e_i,e_j)\langle\na \left(\norm{d\phi}^{2} d\phi \right)(e_i,e_j),\tau_p(\phi)\rangle \dv \\
									\leq  &\int_M 2\na\eta^2\left\langle \norm{\na \phi}^2\d \phi, \tau_4(\phi) \right\rangle \dv\\ 
										\end{split}
	\end{equation}
	Let $r\to \infty$, we find the right side tends to zero.	
\end{proof}

\begin{lem}[cf.\cite{MR954419}]\label{32}
	Let $(M,g)$ be steady gradient Ricci soliton, then
	\begin{equation}
		\begin{split}
			\norm{\na \Scal} \leq  \frac{C}{R}, \, \text{on}\, B_R(x_0)
		\end{split}
	\end{equation}
\end{lem}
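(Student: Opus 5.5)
The plan is to reduce the gradient of the scalar curvature to a product of the Ricci tensor and $\nabla f$, using the soliton structure, and then bound each factor on $B_R(x_0)$ separately.

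\textbf{Step 1 (the identity).} For a steady soliton ($\lambda=0$ in \eqref{eq:soliton}, with $X=\nabla f$) we have $\nabla^2 f=-\operatorname{Ric}^M$. I would take the trace to get $\Delta f=-\operatorname{Scal}^M$, and apply the contracted second Bianchi identity $\operatorname{div}\operatorname{Ric}^M=\tfrac12\nabla\operatorname{Scal}^M$. Commuting covariant derivatives on $\nabla f$ gives $\operatorname{div}\nabla^2 f=\nabla\Delta f+\operatorname{Ric}^M(\nabla f)$. Putting these together yields
\begin{equation*}
\nabla\operatorname{Scal}^M=2\operatorname{Ric}^M(\nabla f)
\end{equation*}
up to the sign convention. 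Equivalently, I would differentiate the identity of Lemma \ref{17} with $\lambda=0$, namely $\operatorname{Scal}^M+\|\nabla f\|^2=C$, which gives $\nabla\operatorname{Scal}^M=-2\nabla^2 f(\nabla f)=2\operatorname{Ric}^M(\nabla f)$. The second route is shorter and is the one I would write down.

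\textbf{Step 2 (pointwise bound).} From Step 1,
\begin{equation*}
\|\nabla\operatorname{Scal}^M\|\leq 2\,\|\operatorname{Ric}^M\|\,\|\nabla f\|.
\end{equation*}
By Lemma \ref{16}, $\|\nabla f\|\leq C$ on all of $M$. The problem therefore reduces to showing $\|\operatorname{Ric}^M\|\leq C/R$ on $B_R(x_0)$, or at least wherever the estimate is applied later, namely near the boundary of the cut-off region $B_{2R}\setminus B_R$.

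\textbf{Step 3 (the main obstacle).} Bounded geometry alone only gives $\|\nabla\operatorname{Scal}^M\|\leq C$. The $1/R$ factor has to come from curvature decay, and this is the step I expect to be hardest; it is presumably where Hamilton's analysis in \cite{MR954419} enters.

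I would first use that a steady soliton generates an eternal solution $g(t)=\varphi_t^*g$ of the Ricci flow. I would then apply Shi-type local derivative estimates on balls of radius comparable to $R$ with time scale $\sim R^2$. This gives $\|\nabla\operatorname{Rm}\|\leq C(K)\,(R^{-2}+K)^{1/2}$ on balls where $\|\operatorname{Rm}\|\leq K$. Feeding in the decay of $\|\operatorname{Ric}^M\|$ along $\nabla f$ (which follows from $\operatorname{Scal}^M\geq 0$ and $\operatorname{Scal}^M+\|\nabla f\|^2=C$) should then yield the $C/R$ bound on the relevant annuli.

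If this general argument does not close, I would fall back on verifying the estimate directly for the case actually used later, the cigar soliton of Theorem \ref{17}. There the explicit formulas $\operatorname{Scal}^M=4/(1+x^2+y^2)$ and $f=-\log(1+x^2+y^2)$ give $\|\operatorname{Ric}^M\|$ decaying like $e^{-r}$ in the geodesic distance $r$, while $\|\nabla f\|$ stays bounded. Step 2 then gives $\|\nabla\operatorname{Scal}^M\|\leq Ce^{-r}\leq C/R$ on the region where the cut-off gradients are supported, which is all that the subsequent Liouville argument needs.
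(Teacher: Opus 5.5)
Your Steps 1 and 2 are correct and follow the route the paper indicates. The paper's entire proof is that the lemma ``follows from'' the identity $\operatorname{Scal}^M+\|\nabla f\|^2=C$ and the bound $\|\nabla f\|\le C$. As you note at the start of Step 3, these give only $\nabla\operatorname{Scal}^M=2\operatorname{Ric}^M(\nabla f)$, hence $\|\nabla\operatorname{Scal}^M\|\le C\|\operatorname{Ric}^M\|$, with no factor $1/R$. So the paper's justification does not establish the lemma either.

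The genuine gap is that Step 3 cannot be completed by any argument, because the statement is false. Suppose $\|\nabla\operatorname{Scal}^M\|\le C/R$ held on $B_R(x_0)$ with $C$ independent of $R$. Any fixed point $y$ lies in $B_R(x_0)$ for all large $R$, so $\nabla\operatorname{Scal}^M(y)=0$. Hence $\operatorname{Scal}^M$ is constant, and the steady soliton identity $\Delta\operatorname{Scal}^M-\langle\nabla f,\nabla\operatorname{Scal}^M\rangle=-2\|\operatorname{Ric}^M\|^2$ forces $\operatorname{Ric}^M\equiv 0$. On the cigar itself, let $r$ be the distance to the tip. Then $1+x^2+y^2=\cosh^2 r$, $\operatorname{Scal}^M=4/\cosh^2 r$ and $\|\nabla\operatorname{Scal}^M\|=8\tanh r/\cosh^2 r$, which is a fixed positive number at every point other than the tip.

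Consequently your reduction to $\|\operatorname{Ric}^M\|\le C/R$ on $B_R(x_0)$ asks for something impossible. The decay of $\|\operatorname{Ric}^M\|$ that you want to feed into Shi-type estimates also does not follow from $\operatorname{Scal}^M\ge 0$ and $\operatorname{Scal}^M+\|\nabla f\|^2=C$. Take the product of the cigar with $\mathbb{R}$, with $f$ pulled back from the cigar; this is a steady gradient soliton. Both relations hold there, and $\operatorname{Ric}^M$ and $\nabla f$ are bounded. Yet $\|\nabla\operatorname{Scal}^M\|$ is constant along each $\mathbb{R}$-line, so it does not decay even on the annuli $B_{2R}(x_0)\setminus B_R(x_0)$. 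Shi's estimates bound $\nabla\mathrm{Rm}$ in terms of a curvature bound; they cannot create decay that is not there.

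Your cigar fallback is a correct computation; the curvature actually decays like $e^{-2r}$, which only helps. But it proves a different statement: an annulus estimate for one particular soliton, not the lemma on $B_R(x_0)$ for every steady soliton. Nor is it what the later argument needs. In the paper's proof of the cigar Liouville theorem, the $\nabla\operatorname{Scal}^M$ term arises from integrating $\operatorname{Scal}^M\langle\|d\phi\|^2d\phi,\nabla\tau_4(\phi)\rangle$ by parts. It is not multiplied by a derivative of the cut-off function; it appears as $\int_M 2\nabla\operatorname{Scal}^M\langle\|\nabla\phi\|^2d\phi,\tau_4(\phi)\rangle$. So it is not confined to $B_{2R}\setminus B_R$, and an annulus bound does not make it vanish. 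The right conclusion is to flag the lemma as false, with the cigar as a counterexample, rather than to try to prove it.
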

\begin{proof}
	It follows from Lemma \ref{17},Lemma \ref{16}.
\end{proof}

\begin{proof}[Proof of Theorem \ref{17}]
	As we know cigar soliton is steady gradient Ricci soliton.
For two-dimensional cigar soliton	, we know that $\la=0,\Scal>0$. Obviously, $2\operatorname{Ric}^M=\operatorname{Scal}^Mg$.
	\begin{equation}
	\begin{split}
		&\int_M\eta^2\big(-2\operatorname{Scal}^M\big)\|\tau_p(\phi)\|^2\dv\\
		&+
		4\int_M\eta^2\operatorname{Ric}^M(e_i,e_j)\langle\na \left(\norm{d\phi}^{2} d\phi \right)(e_i,e_j),\tau_p(\phi)\rangle \dv \\
		\leq  &\int_M 2\na\eta^2\left\langle \norm{\na \phi}^2\d \phi, \tau_4(\phi) \right\rangle \dv\\ 
		+&\int_M 2\na\Scal^M\left\langle \norm{\na \phi}^2\d \phi, \tau_4(\phi) \right\rangle \dv
	\end{split}
\end{equation}

Lemma \ref{15} and Lemma \ref{16}, Lemma \ref{17} yields
\begin{equation*}
	\int_{R^2}\operatorname{Scal}^M\|\tau_p(\phi)\|^2\dv\leq 0.
\end{equation*}
As the cigar soliton has positive scalar curvature we can deduce $\tau_p(\phi)=0$
yielding the claim.
\end{proof}

	\bibliographystyle{plain}
	\bibliographystyle{plain}
\bibliography{a.bib,3-13.bib,zbmath-cherif.bib,mr3.15.bib,zbmath-3.15-2.bib,myonlymathscinetbibfrom2023.bib}

	\end{document}